\newtheorem{theorem}{Theorem}[section]
\newtheorem{corollary}[theorem]{Corollary}
\newtheorem{proposition}[theorem]{Proposition}
\newtheorem{definition}[theorem]{Definition}
\numberwithin{equation}{section}
\theoremstyle{definition}
\theoremstyle{definition}
\theoremstyle{definition}
\newcommand{\C}{\mathbb{C}}
\newcommand{\D}{\mathbb{D}}
\begin{document}

\date{\today}
\title{On Cubic Blaschke Products}

\author{Alastair N. Fletcher}
\email{afletcher@niu.edu}
\address{Department of Mathematical Sciences, Northern Illinois University, DeKalb, IL 60115-2888, USA}

\author{Alexandra Hill}
\email{ahill@elgin.edu}
\address{Elgin Community College, 1700 Spartan Drive, Elgin, IL 60123}

\begin{abstract}
We show that every cubic Blaschke product has a unique hyperbolic inflection point in the unit disk and, moreover, this point lies at the hyperbolic midpoint of the two critical points. Using this structure result for cubic Blaschke products, we give an explicit expression in terms of the parameters which determines when cubic Blaschke products are elliptic, parabolic, or hyperbolic. 
\end{abstract}

\maketitle

\section{Introduction}

\subsection{Background}

Complex dynamics is the study of the iteration of holomorphic or meromorphic functions. This is a very well-developed theory, see for example \cite{Bea91,CarGam93,Mil06}. The Fatou set is the set of stable behaviour of the iterates, while the Julia set is the set of chaotic behaviour.

The proper holomorphic self-maps of $\C$ are none other than polynomials. In this setting, the orbits of critical points play a key role. If all of the orbits of the critical points remain bounded under iteration, then the Julia set is connected, whereas if all of the orbits of the critical points are unbounded, then the Julia set is a Cantor set. 

If we consider unicritical polynomials, that is, those polynomials with one critical point, then the dynamical behaviour is encoded by what happens to this critical point. Every unicritical polynomial is linearly conjugate to one of the form $z^d+c$. As conjugation does not change anything essential about the dynamics, we are thus led to the Multibrot set which, for $d\geq 2$, is defined by
\[ \mathcal{M}_d = \left \{ c \in \C : J(z^d+c) \text{ is connected}\right \}.\]
When $d=2$, there is only one critical point and so the dynamics of quadratic polynomials is encoded by the Mandelbrot set. However, once $d\geq 3$, we can have different critical orbits that escape and remain bounded. Then classifying dynamical behaviour is more complicated.

In this paper, we will investigate this circle of ideas for proper holomorphic self-maps of the disk $\D$. Such maps are finite Blaschke products, that is, maps of the form
\begin{equation} 
\label{eq:blaschke}
B(z) = \mu \prod _{i=1}^d \left ( \frac{ z-w_i}{1-\overline{w_i}z} \right ),
\end{equation}
for $d\geq 1$, $|\mu| = 1$, $w_i \in \D$ for $i=1,\ldots, d$ and $z\in \D$. The Blaschke product is called non-trivial if $d\geq 2$ (and otherwise it is a M\"obius automorphism of $\D$).

It is fairly immediate that the Julia set of a Blaschke product must be contained in $\partial \D$. It turns out that there are two possibilities: either $J(B) = \partial \D$ or it is a Cantor subset of $\partial \D$.

The reason that the dynamical possibilities are simpler for Blaschke products is the Denjoy-Wolff Theorem. As a consequence of this result, every orbit in $\D$ of a non-trivial Blaschke product must converge to a point $w_0 \in \overline{\D}$. This special point is called the Denjoy-Wolff point of $B$ and leads to a classification of Blaschke products. The Blaschke product $B$ is called:
\begin{enumerate}[(a)]
    \item {\it elliptic} if $w_0 \in \D$,
    \item {\it parabolic} if $w_0 \in \partial \D$ and $B'(w_0) = 1$,
    \item {\it hyperbolic} if $w_0 \in \partial \D$ and $0<B'(w_0)<1$.
\end{enumerate}

The dynamics of unicritical Blaschke products were studied in \cite{Fle15,CFY17}. There it was shown that if $B$ is a unicritical Blaschke product, then it is M\"obius conjugate to a unique Blaschke product of the form
\[ B_{w,d}(z) = \left ( \frac{z-w}{1-\overline{w}z} \right )^d. \]
The parameter space for degree $d$ Blaschke products can therefore be identified with $\D$.
The analogous set to the Mandelbrot set in this setting is the connectedness locus
\[ \mathcal{C}_d  = \left \{ w \in \D : J(B_{w,d}) \text{ is connected} \right \}. \]
It is well-known that the Julia set of a Blaschke product is connected if and only if $B$ is either elliptic or parabolic with $B''(w_0)=1$. It was shown in \cite{CFY17, Fle15} that the parabolic parameters for a degree $d$ Blaschke product form an epicycloid with $d-1$ cusps. The elliptic parameters lie inside this epicycloid and the hyperbolic parameters lie outside. See Figure \ref{fig:nephroid} for the degree $3$ case. Then $\mathcal{C}_d$ is the bounded component of the complement of the epicycloid together with the cusps of the epicycloid. In particular, $\mathcal{C}_d$ is neither open nor closed.

\begin{figure}
    \centering
    \includegraphics[width=3in]{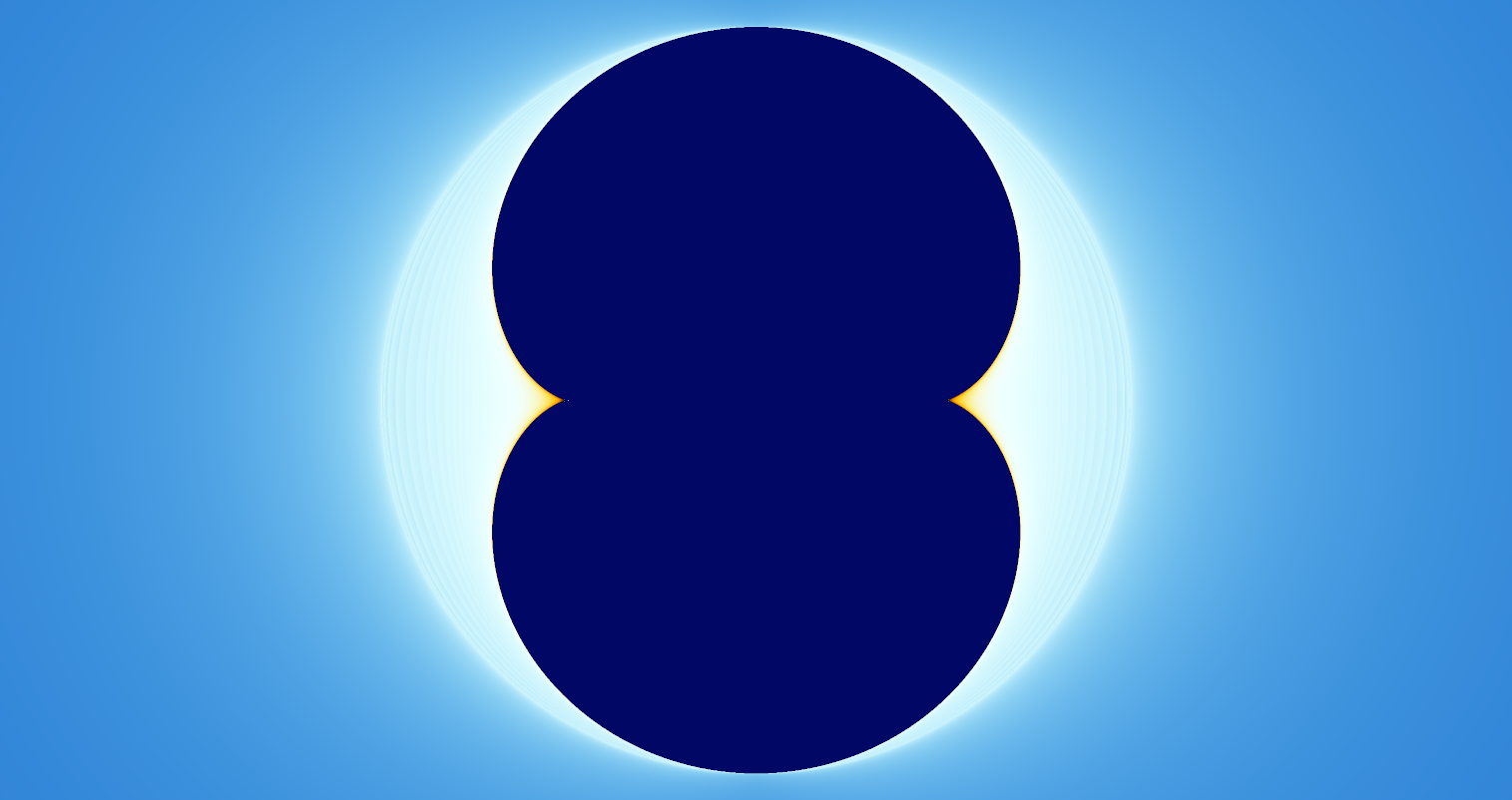}
    \caption{The parameter space for unicritical degree $3$ Blaschke products. The nephroid curve corresponds to the parabolic parameters.}
    \label{fig:nephroid}
\end{figure}

Comparing with the Multibrot set, the Blaschke situation is simpler. The Multibrot set contains an epicycloid whose interior corresponds to the case where the polynomial $z^d+c$ has an attracting fixed point. See Figure \ref{fig:multibrot} for the degree $3$ case. This feature persists to the Blaschke product setting, but all of the other complicated fractal behaviour goes away, essentially because a Blaschke product cannot have periodic attracting cycles that are not fixed.

\begin{figure}
    \centering
    \includegraphics[width=3in]{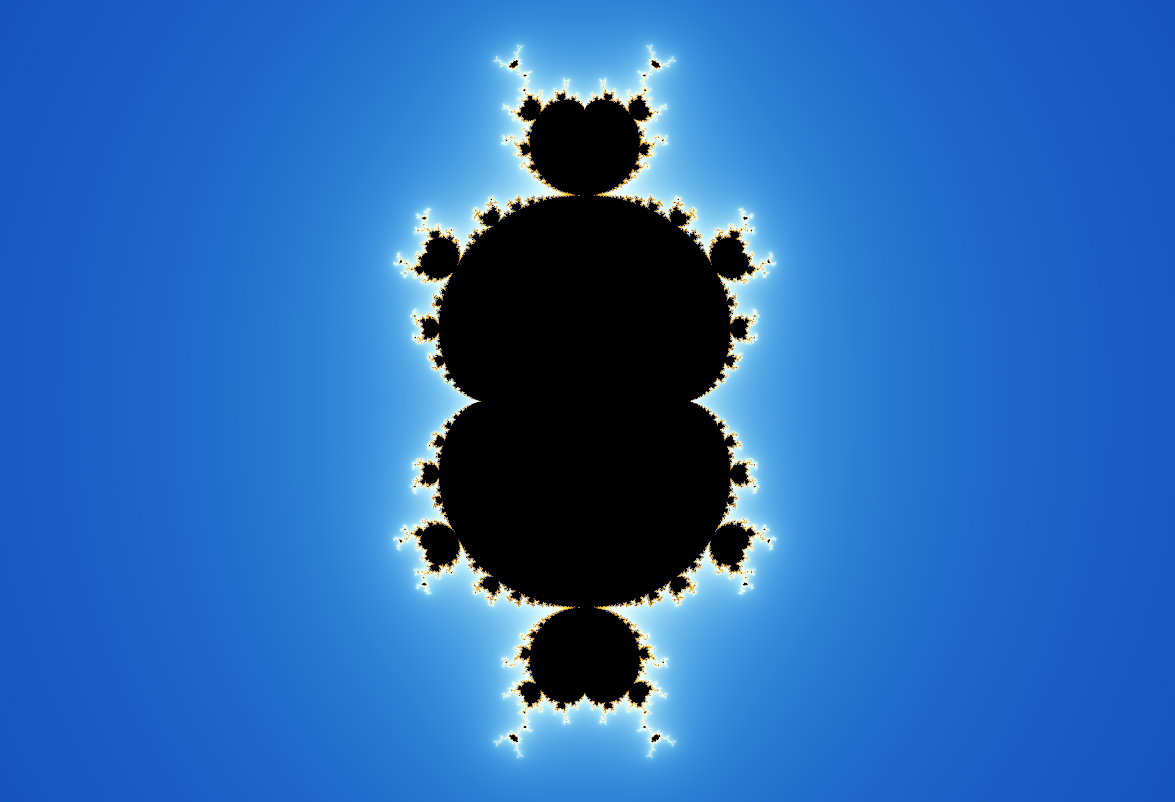}
    \caption{The parameter space for unicritical degree $3$ polynomials. Observe the nephroid appears here too.}
    \label{fig:multibrot}
\end{figure}

\subsection{Statement of Results}

The aim of this paper is to establish a dynamical classification for all degree $3$ Blaschke products. From now on, we call such Blaschke products cubic. First, we establish a normal form for cubic Blaschke products that corresponds to the form $z^3+az+b$ for cubic polynomials. Denote by $\mathcal{A}(\D)$ the set of a M\"obius automorphisms of $\D$.

\begin{proposition}
    \label{prop:normal form}
\begin{enumerate}[(a)]
\item Every cubic Blaschke product is conjugate by an element of $\mathcal{A}(\D)$ to a Blaschke product of the form
\[ B(z) = \mu \left ( \frac{ z^3 - s_0^2r_0 z^2 - s_0^2z + r_0}{\overline{r_0}z^3 - s_0^2z^2 - s_0^2 \overline{r_0}z  + 1} \right ), \]
where $s_0 \in [0,1)$, $r_0 \in \D$, and $|\mu| = 1$.
\item Every cubic Blaschke product is conjugate by an element of $\mathcal{A}(\D)$ to a Blaschke product of the form
\begin{equation}
    \label{eq:normal form}
B(z) = \frac{ z^3 - srz^2 - \overline{s}z +r }{\overline{r}z^3 - sz^2 - \overline{sr}z +1} , 
\end{equation}
where $r,s \in \D$. The form of \eqref{eq:normal form} is unique up to identifying $r$ with $-r$.
\end{enumerate}
\end{proposition}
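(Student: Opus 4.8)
The plan is to realize the normal form through the two critical points of a cubic Blaschke product and the self-inversive structure of its derivative. Write $B = \mu P/Q$ with $P$ monic of degree $3$ and $Q(z) = z^3\overline{P(1/\bar z)}$ its conjugate-reciprocal. Since $B$ is a $3$-fold proper branched self-cover of $\D$, Riemann--Hurwitz gives exactly two critical points $c_1, c_2 \in \D$, and these are the zeros in $\D$ of the Wronskian-type polynomial $W = P'Q - PQ'$, which one checks is self-inversive of degree $4$ with zero set $\{c_1, c_2, 1/\bar c_1, 1/\bar c_2\}$. The first step is to use that every $\phi \in \mathcal{A}(\D)$ is an isometry of the hyperbolic metric, so that conjugation carries critical points to their $\phi$-images and hyperbolic midpoints to hyperbolic midpoints. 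Letting $p \in \D$ be the hyperbolic midpoint of $c_1$ and $c_2$ and choosing $\phi$ with $\phi(p) = 0$, the conjugate $\phi \circ B \circ \phi^{-1}$ has critical points whose midpoint is $0$; since two points have hyperbolic midpoint $0$ only when they are antipodal, these critical points are $\pm c$ for some $c \in \D$.

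Next I would translate this symmetry into the coefficients. Critical points at $\pm c$ force $W$ to have zero set $\{\pm c, \pm 1/\bar c\}$, so $W(z) = K(z^2 - c^2)(1 - \overline{c^2}z^2)$ is even. Writing $P = z^3 + p_2 z^2 + p_1 z + p_0$ and computing the coefficient of $z^3$ in $W = P'Q - PQ'$ yields $w_3 = 2(\bar p_2 - p_1\bar p_0)$, so evenness is equivalent to the single relation $p_2 = \bar p_1 p_0$. One rotational degree of freedom remains, namely conjugation by $z \mapsto e^{i\theta}z$, which acts by $p_0 \mapsto p_0 e^{3i\theta}$, $p_1 \mapsto p_1 e^{2i\theta}$ and $\mu \mapsto \mu e^{-2i\theta}$. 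For part (a) I would use this rotation to make $p_1$ real and nonpositive, writing $p_1 = -s_0^2$ with $s_0 \in [0,1)$ and $p_0 = r_0 \in \D$; the relation $p_2 = \bar p_1 p_0$ then reads $p_2 = -s_0^2 r_0$, and the conjugate-reciprocal $Q$ acquires precisely the displayed coefficients, giving the stated form with $\mu$ retained. The ranges $s_0 \in [0,1)$ and $r_0 \in \D$ come from requiring that the zeros of $P$, and hence the $c_i$, lie in $\D$.

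For part (b) I would instead spend the residual rotation to absorb $\mu$: choosing $\theta$ with $e^{-2i\theta}\mu = 1$ sets the unimodular constant to $1$ and turns the nonnegative real parameter into a general complex $s \in \D$, producing \eqref{eq:normal form} with $p_0 = r$, $p_1 = -\bar s$, $p_2 = -sr$. For uniqueness, note that any $\psi \in \mathcal{A}(\D)$ carrying one instance of \eqref{eq:normal form} to another must preserve the normalization. Preserving evenness of $W$ means preserving that the hyperbolic midpoint of the critical points is $0$; by equivariance this forces $\psi(0)=0$, so $\psi$ is a rotation $z \mapsto e^{i\theta}z$. Preserving $\mu = 1$ then forces $e^{-2i\theta} = 1$, i.e. $\theta \in \{0, \pi\}$. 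The choice $\theta = \pi$ sends $(r, s) \mapsto (-r, s)$, which is exactly conjugation by $z \mapsto -z$, and this is the only nontrivial identification, establishing uniqueness up to replacing $r$ by $-r$.

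I expect the main obstacle to be the bookkeeping in the second step: confirming the self-inversive degree-$4$ structure of $W$, extracting the coefficient $w_3$ cleanly, and — more delicately — pinning down the admissible ranges $s_0 \in [0,1)$ and $r,s \in \D$ by tracking exactly when the zeros of $P$ and the critical points remain inside $\D$. The conceptual steps, namely the equivariance of the hyperbolic midpoint and the rotation count that produces the $r \mapsto -r$ ambiguity, are clean; the risk lies in the algebra relating $c$, the coefficients, and the admissible region.
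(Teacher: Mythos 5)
Your proposal is correct and takes essentially the same route as the paper: both normalize the two critical points symmetrically about $0$, extract the coefficient relation $p_2=\overline{p_1}p_0$ (the paper's $r_2=\overline{r_1}r_0$) from the vanishing of the odd coefficients of the self-inversive quartic $P'Q-PQ'$, spend the residual rotation to make $p_1=-s_0^2$ for part (a) or to absorb $\mu$ for part (b), and obtain the identification of $(r,s)$ with $(-r,s)$ from conjugation by $-z$, with your uniqueness argument ($\psi(0)=0$ by midpoint equivariance, then $\mu$-preservation forcing $\theta\in\{0,\pi\}$) being a minor variant of the paper's critical-point argument. The range check you flag is genuinely needed (zeros of $P$ in $\D$ alone do not give $|p_1|<1$) but is routine and matches what the paper leaves implicit: with $p_1=-t$, $t\geq 0$, the relation $p_2=\overline{p_1}p_0$ reduces the critical-point equation to $tz^4-(3-t^2)z^2+t=0$, and if $t\geq 1$ the two roots in $z^2$ are unimodular, which would place the critical points on $\partial\D$; hence $t<1$, i.e.\ $s_0\in[0,1)$.
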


We have stated Proposition \ref{prop:normal form} in this way as part~(a) may be found in an unpublished note of Singer \cite{Sin} that we wish to give due credit to. Part~(b) follows quickly from part~(a) and shows that we may identify parameter space for cubic Blaschke products with $\D^2$.

Our first main result is a type of structure theorem for cubic Blaschke products. The motivation for this result arises from \cite[Theorem 3.1]{CFY17} where it was shown for a degree $2$ Blaschke product that the unique critical point lies at the hyperbolic midpoint of the two zeros of $B$. One may hope that for cubic Blaschke products there may be a similar result involving solutions of $B''(z) = 0$, as there are precisely two critical points. However, it is far from clear how many solutions of this equation a cubic Blaschke product can have in the disk. Further issues arise due to the fact that the set of Blaschke products is not closed under differentiation, and that higher derivatives of Blaschke products may not behave in a conformally invariant way.

Instead of using derivatives, we will use hyperbolic derivatives $H^nB$ as defined by Rivard \cite{Riv11}. We will recall these in more detail in Section \ref{ss:hdd}, but note here that critical points of a Blaschke product $B$ are precisely the solutions of $(H^1B)(z) = 0$, and we will call solutions of $(H^2B)(z) = 0$ {\it hyperbolic inflection points}. We will see that hyperbolic inflection points will typically not be solutions of $B''(z)=0$.

\begin{theorem}
    \label{thm:structure}
Let $B$ be a cubic Blaschke product. Then $B$ has precisely one hyperbolic inflection point in $\D$ and, moreover, it lies at the hyperbolic midpoint of the two critical points of $B$.
\end{theorem}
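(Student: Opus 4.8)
The plan is to reduce the entire statement to the degree-$2$ case by means of the hyperbolic derivative machinery. The crucial structural input, which I would recall from Rivard \cite{Riv11} in Section \ref{ss:hdd}, is that the operator $H^1$ sends a Blaschke product of degree $d$ to a Blaschke product of degree $d-1$, and that the zeros of $H^1 B$ are exactly the critical points of $B$. I would also fix the reading $H^2 = H^1 \circ H^1$, the natural hyperbolic analogue of the second derivative, so that the hyperbolic inflection points, defined as the zeros of $H^2 B$, are the zeros of $H^1(H^1 B)$. Granting this setup, the result follows essentially immediately.

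First I would apply $H^1$ to the cubic $B$. Since a degree-$3$ Blaschke product has exactly two critical points in $\D$ (counted with multiplicity), $H^1 B$ is a Blaschke product of degree exactly $2$ whose two zeros are precisely the two critical points $c_1, c_2$ of $B$. Applying $H^1$ once more, $H^2 B = H^1(H^1 B)$ is a Blaschke product of degree $1$, that is, an element of $\mathcal{A}(\D)$. Such a map has exactly one zero in $\D$, so $(H^2 B)(z) = 0$ has precisely one solution there, which lies automatically in $\D$ rather than on $\partial\D$. This already yields the first assertion: $B$ has exactly one hyperbolic inflection point.

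For the location I would invoke the degree-$2$ structure theorem \cite[Theorem 3.1]{CFY17}, which states that the unique critical point of a degree-$2$ Blaschke product lies at the hyperbolic midpoint of its two zeros. Applied to the degree-$2$ Blaschke product $H^1 B$, its unique critical point is the hyperbolic midpoint of its two zeros $c_1$ and $c_2$. But by the defining property of $H^1$ the critical point of $H^1 B$ is exactly the zero of $H^1(H^1 B) = H^2 B$, namely the hyperbolic inflection point of $B$. Chaining these identifications shows that the hyperbolic inflection point coincides with the hyperbolic midpoint of $c_1$ and $c_2$, as required.

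The genuine work, and the main obstacle, lies not in this short deduction but in verifying the input facts about $H^1$: that $H^1 B$ is a bona fide Blaschke product of degree exactly $d-1$ (so that $H^1 B$ is a legitimate degree-$2$ Blaschke product eligible for the midpoint theorem), that its zero set is exactly the critical set of $B$, and that the object defining hyperbolic inflection points is correctly $H^1 \circ H^1$. I would handle these by carefully unwinding Rivard's definition in Section \ref{ss:hdd}, checking in particular that the degree does not drop below $d-1$ and that $H^1 B$ is nonconstant, so that the degree-$1$ conclusion for $H^2 B$ is valid. Once the behaviour of $H^1$ on degrees and critical sets is pinned down, the theorem is a two-step application of the already-established degree-$2$ result.
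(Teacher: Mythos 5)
There is a genuine gap, and it sits exactly where you locate the ``crucial structural input'': the reading $H^2 = H^1 \circ H^1$ is not Rivard's definition and cannot be made to work. In the framework of Definitions \ref{def:divdiff} and \ref{def:hypderiv}, $H^nB(z) = \Delta_z^n B(z)$ is a \emph{diagonal} evaluation: the base point of the divided difference and the evaluation point coincide and move together. The function $z \mapsto H^1B(z) = B'(z)(1-|z|^2)/(1-|B(z)|^2)$ from \eqref{eq:H1} is not holomorphic---it depends on $\overline{z}$---so it is not a Blaschke product at all, and ``$H^1(H^1B)$'' is not even defined within the theory, let alone equal to $H^2B$. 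What Theorem \ref{thm:hypdiffbl} makes a degree-two Blaschke product is $z \mapsto (\Delta_w B)(z)$ for a \emph{fixed} base point $w$; but its two zeros are the solutions of $B(z)=B(w)$ other than $z=w$, i.e.\ the other preimages of $B(w)$---not the two critical points $c_1,c_2$ of $B$, as you assert. Similarly, $z\mapsto(\Delta_w^2 B)(z)$ is a degree-one Blaschke product for each fixed $w$, with a unique zero $z^*(w)$, but a hyperbolic inflection point is a solution of the diagonal equation $z^*(w)=w$, and since $z^*$ depends on $w$ non-holomorphically (through $\overline{w}$ and $\overline{B(w)}$), no degree count gives ``exactly one solution'' for free. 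That uniqueness is precisely the hard content of the theorem; the paper explicitly remarks that it is far from clear a priori how many such points there can be.

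The salvageable part of your plan is the appeal to \cite[Theorem 3.1]{CFY17}, which is indeed the engine of the paper's proof, but it must be applied to the moving family $g_w := \Delta_w B$: one has $H^2B(w) = H^1 g_w(w)$, so $w$ is a hyperbolic inflection point if and only if $w$ is the hyperbolic midpoint of the two roots $p(w), q(w)$ of $g_w$---roots which vary with $w$. The paper normalizes $B$ via Proposition \ref{prop:normal form}(a) so that the critical points sit at $\pm\epsilon$ on the real axis, computes $p+q$ and $pq$ explicitly, substitutes into the midpoint formula \eqref{eq:hypmidpt}, and reduces the diagonal condition $c=w$ to
\begin{equation*}
s_0^{2}w^{3}+3\overline{w}w^{2}+3w+s_0^{2}\overline{w}=0,
\end{equation*}
whose real--imaginary analysis shows the only root in $\D$ is $w=0$ (all other candidate solutions lie on $|w|=1$); by the normalization, $0$ is the hyperbolic midpoint of the critical points. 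To repair your argument you would have to carry out this $w$-dependent computation or an equivalent substitute; the two-step reduction to the degree-two theorem does not survive the fact that the base point moves with the evaluation point.
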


The point about the normal forms from Proposition \ref{prop:normal form} is then revealed by the facts that in part~(a), the unique hyperbolic inflection point lies at the origin and the two critical points lie at $\pm \epsilon$ on the real axis for some $\epsilon \in [0,1)$. In part~(b), the hyperbolic inflection point still lies at the origin, but the trade off of conjugating away the $\mu$ term is that the critical points now lie at $\pm \epsilon e^{i\theta}$.

Turning to dynamical matters, as every degree $2$ Blaschke product is unicritical, a complete classification for their dynamics was obtained in \cite[Theorem 3.4]{CFY17}. As not every cubic Blaschke product is unicritical, the work of \cite{CFY17} only covers a special case in the cubic setting. Our second main result is the following classification for cubic Blaschke products.

\begin{theorem}
    \label{thm:dynamics}
Suppose that $B$ is of the form \eqref{eq:normal form}. Then $B$ is hyperbolic, parabolic, or elliptic if and only if $P(r,s)$ is positive, zero, or negative respectively, where
\begin{align*}
P(r,s) &= \left(\left( 16|r|^2-\left|1+s\right|^2\right)^2-\left|8\overline{r}\left(\overline{sr}-sr\right)+3\left(1+s\right)^2\right|^2\right)^2 \\
&\hskip0.2in -\left|\left(\left|1+s\right|^{2}-16\left|r\right|^{2}\right)\left(2\left(1+s\right)\left(sr-\overline{\left(sr\right)}\right)+12\overline{r}\left(1+\overline{s}\right)\right) \right . \\
&\hskip0.5in \left . +\left(3\left(1+s\right)^{2}+8\overline{r}\left(\overline{\left(sr\right)}-sr\right)\right)\left(2\left(1+\overline{s}\right)\left(\overline{sr}-sr\right)+12r\left(1+s\right)\right)\right|^{2}.
\end{align*}
\end{theorem}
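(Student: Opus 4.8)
The plan is to reduce the dynamical classification to a concrete algebraic condition on the fixed points of $B$ lying on $\partial\D$. Since $B$ has degree $3$, the Denjoy-Wolff point is a fixed point, and the classification into elliptic, parabolic, or hyperbolic is governed entirely by the boundary fixed points. A Blaschke product of the form \eqref{eq:normal form} is elliptic precisely when its Denjoy-Wolff point lies in $\D$, which happens iff $B$ has no boundary fixed point with derivative in $(0,1]$; it is parabolic or hyperbolic according to whether the relevant boundary fixed point is a degenerate (double) fixed point or a simple attracting one. The key observation I would exploit is that these three cases are distinguished by the multiplicity structure of the boundary fixed points, and hence by a discriminant-type quantity.

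First I would write down the fixed-point equation $B(z) = z$, which for \eqref{eq:normal form} becomes a polynomial equation $N(z) - zD(z) = 0$ of degree $4$, where $N$ and $D$ are the numerator and denominator of $B$. The next step is to separate the boundary fixed points from interior ones. The standard approach here is to observe that on $\partial\D$ we have $\overline{z} = 1/z$, so I would substitute this relation and convert the condition ``$B$ has a double boundary fixed point'' into a resultant or discriminant condition. Concretely, a parabolic parameter corresponds to the fixed-point polynomial on the boundary having a repeated root, so I would compute the relevant discriminant of the boundary fixed-point equation and identify $P(r,s)$ as (a suitable power of) that discriminant. The signs then come from tracking how the boundary fixed points collide and move on or off the circle as the parameters vary.

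The structural input from Theorem \ref{thm:structure} is what makes this tractable: because the hyperbolic inflection point sits at the origin and the critical points are symmetric about it, the boundary fixed-point equation inherits a symmetry that lets me factor the quartic and reduce the discriminant computation to something manageable. I would use this symmetry to write the boundary fixed-point condition in terms of $|1+s|^2$, $|r|^2$, and the skew-symmetric combination $sr - \overline{sr}$, which are exactly the building blocks appearing in $P(r,s)$. Matching the explicit polynomial to these invariants, and checking the sign conventions against the known unicritical case (where the parabolic locus is the nephroid of Figure \ref{fig:nephroid}), gives the stated equivalence.

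The main obstacle will be the sign analysis rather than the algebra: deriving a discriminant that vanishes exactly on the parabolic locus is routine in principle, but showing that its sign cleanly separates the elliptic region (Denjoy-Wolff point inside $\D$) from the hyperbolic region (attracting boundary fixed point) requires care. In particular I must verify that no spurious sign changes occur at parameters where interior fixed points, rather than boundary ones, collide, and that the fourth-power and modulus-squared structure of $P(r,s)$ is genuinely forced rather than merely convenient. I expect the cleanest route is to verify the equivalence on a codimension-one slice where the computation collapses (for instance $s = 0$, recovering a unicritical-type family) and then argue that $P(r,s)$, being a polynomial whose zero set is exactly the parabolic locus, must carry the correct sign on each complementary region by continuity.
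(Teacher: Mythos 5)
Your opening reduction is the same first move as the paper: clearing denominators in $B(z)=z$ for \eqref{eq:normal form} gives the self-inversive quartic $p(z)=\overline{r}z^{4}-(1+s)z^{3}+(sr-\overline{sr})z^{2}+\overline{(1+s)}z-r$, and hyperbolicity forces all roots of $p$ to be simple and to lie on $\partial\D$. But from there the paper does something your proposal is missing, and your substitute mechanism does not work. The paper never takes a discriminant of $p$: it applies the Euclidean Gauss--Lucas theorem (Theorem \ref{thm:gausslucas}(a)) to conclude that in the hyperbolic case all roots of $p'$ lie strictly inside $\D$, hence all roots of the reciprocal cubic $q=(p')^{*}$ lie in $\D^{*}$, and then runs the Schur--Cohn Algorithm (Theorem \ref{thm:schurcohn}) on $q$. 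The quantity $P(r,s)$ is, by direct computation of the three Schur transforms, exactly $\delta_{3}=T^{3}q(0)$, and the remaining conditions come along for free because $\delta_{1}$ appears as the squared leading term of $\delta_{2}$ and $\delta_{2}$ as that of $\delta_{3}$. Your plan to ``identify $P(r,s)$ as (a suitable power of) the discriminant'' of the fixed-point equation fails on a degree count: $P$ is homogeneous of degree $8$ in the coefficients of $q$ (equivalently, of $p$) and their conjugates --- it has the shape $(\delta_{1}^{2}-|\cdot|^{2})^{2}-|\cdot|^{2}$ --- while the discriminant of a quartic is homogeneous of degree $6$ in its coefficients, so no power of it (degree $6k$) nor its modulus squared (degree $12$) can equal $P$. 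Moreover the discriminant of a complex quartic is a complex number whose sign is not defined without further argument, and the condition $P=0$ is of resultant type on $q$ and $q^{*}$ (detecting a root of $p'$ on $\partial\D$), not the condition that $p$ have a double root; these loci coincide only a posteriori on the boundary of the hyperbolic region.

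Two further gaps. First, Theorem \ref{thm:structure} plays no role in this proof, and the symmetry you hope to exploit is not there: the identification $(r,s)\sim(-r,s)$ comes from conjugating $B$ by $-z$, and the quartic $p$ above has no $z\mapsto -z$ symmetry for generic $(r,s)$, so there is no factorization of $p$ to be had; the only structural input is the normal form of Proposition \ref{prop:normal form}(b). Second, your closing continuity argument is incomplete as stated: checking signs on the slice $s=0$ determines the sign of $P$ only on those components of $\{P\neq 0\}\subset\D^{2}$ that meet that slice, and you give no argument that every component does. The paper avoids all such sign bookkeeping because Schur--Cohn yields the pointwise characterization that $B$ is hyperbolic if and only if $\delta_{1},\delta_{2},\delta_{3}>0$; the parabolic and elliptic cases then fall out as the boundary of that region and its complement, with $P$ handed to you mechanically by the algorithm rather than reverse-engineered from a discriminant.
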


While the formula for $P(r,s)$ may not be the nicest, it does provide a complete classification for cubic Blaschke products in terms of the parameters. The strategy in \cite{CFY17} was to identify the parabolic parameters by simultaneously solving $B(z)=z$ and $B'(z)=1$. This is less amenable to solving in the cubic setting, and so we adopt a different approach. We will instead find the hyperbolic parameters via an application of the Schur-Cohn Algorithm. 

The paper is organized as follows. In Section \ref{s:prelims}, we will cover the background material we need on the hyperbolic metric, Blaschke products and their properties, hyperbolic divided differences, and the Schur-Cohn algorithm and related matters. In Section \ref{s:inflection} we will prove Proposition \ref{prop:normal form} and Theorem \ref{thm:structure}. In Section \ref{s:dynamics} we will apply the Schur-Cohn Algorithm to the degree two case to give another proof of results from \cite{CFY17} and then prove Theorem \ref{thm:dynamics} on the degree three case.

This paper is based on the dissertation of the second named author at Northern Illinois University, and both authors would like to thank the external examiner, Javad Mashreghi, for many useful comments. The images were generated by {\it Ultra Fractal 6}.

\section{Preliminaries}
\label{s:prelims}

We denote by $\D^*$ the complement of $\overline{\D}$ in the Riemann sphere $\C_{\infty}$.

\subsection{The hyperbolic metric}

We refer to Beardon and Minda's expository article \cite{BM07} for further details than the overview we provide here.
The unit disk $\D$ carries a complete conformal metric called the hyperbolic metric. This conformal metric has density
\[ \lambda (z) |dz| =  \frac{ 2|dz|}{1-|z|^2} \]
and distance explicitly given by
\[ \rho(z,w) = \log \left ( \frac{ 1 + \left | \frac{z-w}{1-\overline{w}{z}} \right | }{ 1 - \left | \frac{z-w}{1-\overline{w}{z}} \right | } \right ), \]
for $z,w\in \D$. Denote by $\mathcal{A}(\D)$ the set of M\"obius automorphisms of $\D$ given by
\[ A(z) = \mu \left ( \frac{z-w}{1-\overline{w}z} \right ),\]
for $|\mu| =1$ and $w\in \D$. Then a modern reformulation of the Schwarz-Pick Lemma says that the hyperbolic isometries are precisely the elements of $\mathcal{A}(\D)$ and, moreover, if $f:\D\to \D$ is a holomorphic map that is not in $\mathcal{A}(\D)$, then $f$ is a strict hyperbolic contraction. In other words, if $f:\D\to \D$ is holomorphic, then
\[ \rho(f(z),f(w)) \leq \rho(z,w),\]
for all $z,w\in \D$ with equality anywhere if and only if $f \in \mathcal{A}(\D)$.

The hyperbolic geodesics are arcs of circles which meet $\partial \D$ perpendicularly. Then the hyperbolic mid-point of $z,w\in \D$ is given by
\begin{equation}
    \label{eq:hypmidpt}
c = \frac{ |zw|^2-1 + \sqrt{ (1-|z|^2)(1-|w|^2)|1-\overline{w}z|^2 }}{\overline{zw}(z+w) - (\overline{z} + \overline{w})},
\end{equation}
see \cite[Proposition 3.2]{CFY17}.

\subsection{Blaschke Products}

We refer to the book of Garcia, Mashreghi and Ross \cite{GMR18} for more details on Blaschke products that we provide here.

Recall the definition of a finite Blaschke product from \eqref{eq:blaschke}. As a Blaschke product is a product of M\"obius factors, it is clear that the poles of $B$ lie in $\D^*$ and so $B$ is holomorphic in $\D$. As another consequence of this, $B(\D) = B^{-1}(\D) = \D$, $B(\D^*) = B^{-1}(\D^*) = \D^*$, and $B(\partial \D) = B^{-1}(\partial \D) = \partial \D$. In other words, $\D,\D^*$ and $\partial \D$ are all completely invariant sets of $B$.

We can compute that the logarithmic derivative of $B$ is
\[ \frac{B'(z)}{B(z)} = \sum_{i=1}^d \frac{ 1-|w_i|^2}{(1-\overline{w_i}z)(z-w_i)}. \]
Thus for $|z|=1$, we have
\[ |B'(z)| = \sum_{i=1}^d  \frac{ 1-|w_i|^2}{ |z-w_i|^2}.\]
As each term on the right is strictly positive, it follows that $B$ has no critical points on $\partial \D$. For us, it will be important to know how many critical points of $B$ lie in $\D$. Then \cite[Theorem 6.1.4]{GMR18} states that a degree $d$ Blaschke product has $d-1$ critical points (counting multiplicity) in $\D$. We can obtain more information on the location of critical points from the following result, see \cite[Corollary 5.1.4]{GMR18} for the Euclidean case and \cite[Theorem 6.1.6]{GMR18} for the hyperbolic case.

\begin{theorem}[Gauss-Lucas Theorem]
\label{thm:gausslucas}
\begin{enumerate}[(a)]
\item Let $p$ be a non-constant polynomial. Then the zeros of $p'$ are contained in the convex hull of the zeros of $p$.
\item Let $B$ be a finite Blaschke product. Then the zeros of $B'$ in $\D$ are contained in the hyperbolic convex hull of the zeros of $B$.
\end{enumerate}
\end{theorem}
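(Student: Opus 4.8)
For part (a), the plan is to exploit the logarithmic derivative. Writing $p(z) = a\prod_{i=1}^n (z - z_i)$, we have $p'(z)/p(z) = \sum_{i=1}^n (z - z_i)^{-1}$. If a zero $w$ of $p'$ is also a zero of $p$, it is one of the $z_i$ and there is nothing to prove; otherwise $p(w)\neq 0$ and $\sum_i (w - z_i)^{-1} = 0$. Rewriting each term as $(w - z_i)^{-1} = \overline{(w - z_i)}/|w - z_i|^2$ and taking complex conjugates turns this into $\sum_i \lambda_i (w - z_i) = 0$ with $\lambda_i = |w - z_i|^{-2} > 0$. Solving for $w$ exhibits it as the convex combination $w = (\sum_i \lambda_i z_i)/(\sum_i \lambda_i)$ of the $z_i$, so $w$ lies in their convex hull.

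For part (b), I would first reduce to the origin using conformal invariance. Since elements of $\mathcal{A}(\D)$ are hyperbolic isometries, the hyperbolic convex hull is preserved under $\mathcal{A}(\D)$; and since composing with an automorphism neither creates nor destroys critical points (the derivative of an automorphism never vanishes), pre-composing $B$ with a suitable $A \in \mathcal{A}(\D)$ with $A(0)$ equal to a given critical point lets me assume that critical point sits at the origin, while the zeros become $A^{-1}(w_i)$, which I again call $w_i$. If the origin is itself one of the $w_i$ it lies trivially in the hull, so assume not. Then the logarithmic derivative formula from the previous subsection, evaluated at $0$, gives $\sum_i (1 - |w_i|^2)/(-w_i) = 0$; writing $w_i^{-1} = \overline{w_i}/|w_i|^2$ and conjugating yields $\sum_i \lambda_i w_i = 0$ with $\lambda_i = (1 - |w_i|^2)/|w_i|^2 > 0$. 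Exactly as in part (a), this shows that $0$ is a \emph{Euclidean} convex combination of the $w_i$.

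It remains to upgrade Euclidean convexity at the origin to hyperbolic convexity, and this is where I expect the only real subtlety to lie. I would argue by separation. If $0$ were not in the hyperbolic convex hull of $\{w_i\}$, some geodesic would strictly separate it from all the $w_i$. This geodesic cannot be a diameter, since $0$ would then lie on it, so it is an arc of a Euclidean circle $|z - a| = r$ meeting $\partial \D$ orthogonally, for which $|a|^2 = 1 + r^2$ and hence $0 \notin D(a,r)$. The two hyperbolic half-planes it determines are $\D \cap D(a,r)$ and its complement in $\D$; as $0$ lies in the latter, all the $w_i$ must lie in the lens $\D \cap D(a,r)$. But this lens, being an intersection of two Euclidean disks, is Euclidean convex, so it contains the whole Euclidean convex hull of the $w_i$ and in particular the origin, a contradiction. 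Hence $0$ does belong to the hyperbolic convex hull, finishing part (b). The only step needing genuine care is this last one, namely identifying a hyperbolic half-plane that omits the center as a Euclidean-convex lens; the rest is the same weighted-average identity already used for polynomials.
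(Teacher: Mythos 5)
The paper does not actually prove this theorem: it is quoted with pointers to \cite[Corollary 5.1.4]{GMR18} for part (a) and \cite[Theorem 6.1.6]{GMR18} for part (b), so there is no internal proof to compare against, and your argument must stand on its own --- which it does. Part (a) is the classical Gauss--Lucas computation and is correct. For part (b), the reduction is sound: pre-composing with $A\in\mathcal{A}(\D)$ keeps you in the class of finite Blaschke products of the same degree, $A'$ never vanishes so critical points in $\D$ correspond under $A$, and automorphisms are hyperbolic isometries so they preserve hyperbolic convex hulls. Your case split correctly disposes of a critical point that is itself a (necessarily multiple) zero, after which all $w_i\neq 0$, the weights $\lambda_i=(1-|w_i|^2)/|w_i|^2$ are strictly positive, and the paper's logarithmic derivative formula at $z=0$ with $B'(0)=0$, $B(0)\neq 0$ gives $\sum_i\lambda_i w_i=0$ as claimed. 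The key upgrade from Euclidean to hyperbolic convexity --- that a hyperbolic half-plane whose bounding geodesic misses the origin is a lens $\D\cap D(a,r)$ with $|a|^2=1+r^2$, hence a Euclidean-convex set omitting $0$ --- is exactly the mechanism behind the standard proof of the hyperbolic Gauss--Lucas theorem, so you have in effect reconstructed the argument of \cite{GMR18} rather than found a divergent route. The one fact you invoke without justification is that a point outside the hyperbolic convex hull of finitely many points admits a strictly separating geodesic; this is standard (the hull is compact, and the perpendicular geodesic at the midpoint of the segment from $0$ to its nearest point in the hull works), but if you want to avoid quoting it you can instead use the characterization of the closed hull as the intersection of all closed hyperbolic half-planes containing the $w_i$: if $0$ lay outside, some such half-plane $H\supseteq\{w_1,\dots,w_d\}$ would omit $0$, its boundary geodesic could not be a diameter, and your lens computation applied to $H$ yields the same contradiction without any separation theorem.
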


\subsection{Complex dynamics}
\label{ss:cd}

If $f$ is a rational map, then $z\in \C_{\infty}$ is said to be in the Fatou set $F(f)$ if there exists a neighbourhood $U$ of $z$ such that $(f^n|_U)_{n=1}^{\infty}$ is a normal family. If no such neighbourhood exists, then $z$ is said to be in the Julia set.

An important characterization of the Julia set is that $z\in J(f)$ if and only if for every neighbourhood $U$ of $z$, the forward orbit or $U$,
\[ O^+(U) = \bigcup_{n=0}^{\infty} f^n(U), \]
omits at most two points in $\C_{\infty}$. It follows from this property, along with the complete invariance of $\D,\D^*$ and $\partial \D$ under a finite Blaschke product $B$ that $J(B)$ must be contained in $\partial \D$. Thus the unit disk $\D$ is always contained in the Fatou set $F(B)$ and the precise iterative behaviour there is described by the following theorem.

\begin{theorem}[Denjoy-Wolff Theorem]
\label{thm:dw}
Let $f:\D\to\D$ be holomorphic. Then either $f$ is conjugate to a rotation, or there exists $w_0 \in \overline{\D}$ such that $f^n$ converges to the constant map $w_0$ locally uniformly on $\D$.
\end{theorem}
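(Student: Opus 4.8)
The plan is to prove the dichotomy by splitting on whether $f$ admits a fixed point in the interior of $\D$, and then to exploit the Schwarz--Pick contraction property together with the normality of the iterates.

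First I would dispose of the case where $f$ has a fixed point $p \in \D$. Conjugating by the element of $\mathcal{A}(\D)$ carrying $p$ to $0$, I may assume $f(0)=0$, so the Schwarz lemma gives $|f(z)| \leq |z|$ with equality at some nonzero point exactly when $f$ is a rotation. If $f$ is a rotation, then $f$ is conjugate to a rotation and the first alternative holds. Otherwise $f \notin \mathcal{A}(\D)$, whence $|f'(0)|<1$; writing $f(z)=zh(z)$ with $h$ holomorphic and $|h|\leq 1$, the maximum principle yields $\max_{|z|\leq r}|h| = \mu_r < 1$ for each $r<1$, so that $|f^n(z)| \leq \mu_r^{\,n} r \to 0$ uniformly on $\{|z|\leq r\}$, and therefore $f^n \to 0$ locally uniformly.

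The substantive case is when $f$ has no fixed point in $\D$; here I must produce the Denjoy--Wolff point $w_0 \in \partial\D$. I would introduce the dilations $f_r(z)=rf(z)$ for $r \in (0,1)$, which send $\overline{\D}$ into the compact set $r\overline{\D} \subset \D$ and are therefore uniform hyperbolic contractions with a unique fixed point $z_r \in \D$. Choosing $r \to 1$ so that $z_r \to w_0 \in \overline{\D}$, the relation $z_r = r f(z_r)$ would force $f(w_0)=w_0$ if $w_0$ lay in $\D$, contradicting the absence of an interior fixed point; hence $w_0 \in \partial\D$. Applying the Schwarz--Pick inequality $\rho(f_r(z),z_r) \leq \rho(z,z_r)$ and letting $r \to 1$, the hyperbolic disks centred at $z_r$ degenerate onto horodisks tangent to $\partial\D$ at $w_0$, giving Wolff's lemma: $f$ maps every such horodisk into itself. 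With these invariant horodisks in hand I would finish using normality. Since $|f^n|\leq 1$, Montel's theorem makes $\{f^n\}$ a normal family, and any locally uniform subsequential limit $g:\D \to \overline{\D}$ respects the nested horodisks, so $g$ cannot be an open map into $\D$ and must be a constant whose value lies in the intersection of the closed horodisks, namely $w_0$. As every subsequential limit equals the constant $w_0$, the full sequence $f^n$ converges to $w_0$ locally uniformly. The parabolic and hyperbolic automorphism cases, where Schwarz--Pick is not strict, are covered verbatim by this horodisk argument, the only non-converging maps being the elliptic automorphisms, which are precisely those conjugate to a rotation.

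I expect the boundary case to be the main obstacle, and within it the two delicate points are the verification of Wolff's invariant-horodisk lemma and the upgrade from subsequential to full convergence. The difficulty is that the controlling estimates are hyperbolic while the metric degenerates at $\partial\D$; one must therefore track carefully how the hyperbolic disks based at $z_r \to w_0$ converge to a genuine horodisk and confirm that the contraction property survives this limit, and then argue that no subsequential limit of the iterates can be nonconstant.
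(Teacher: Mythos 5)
The paper states this theorem without proof, as classical background (it is covered by the cited references \cite{Bea91,CarGam93,Mil06}), so your attempt can only be measured against the standard argument, which is indeed the route you chose: the Schwarz-lemma contraction in the interior-fixed-point case, and Wolff's lemma via the dilations $f_r(z)=rf(z)$ in the fixed-point-free case. Your first case is correct and complete, and your construction of $w_0$ and of the invariant horodisks is the right skeleton.

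There is, however, a genuine gap at the step where you conclude that every subsequential limit is constant: you assert that a limit $g$ ``respects the nested horodisks, so $g$ cannot be an open map into $\D$ and must be a constant.'' Horodisk invariance only yields $g(H)\subseteq \overline{H}$ for each horodisk $H$ at $w_0$, and this is perfectly compatible with $g$ being open and nonconstant --- the identity map satisfies it, and so does every fixed-point-free automorphism of $\D$ at its Denjoy--Wolff point. So invariant horodisks alone do not force constancy, and this is precisely the point where your claim that the hyperbolic and parabolic automorphism cases are ``covered verbatim'' breaks down. The standard repair has three pieces: (i) a constant limit $c\in\D$ is impossible, since $f^{n_k}\to c$ together with $f^{n_k+1}=f\circ f^{n_k}=f^{n_k}\circ f$ gives $f(c)=c$, contradicting fixed-point-freeness; (ii) a nonconstant limit $g$ is impossible because, passing to a further subsequence so that $f^{n_{k+1}-n_k}\to h$, one gets $h\circ g=g$, hence $h=\mathrm{id}$ on the open set $g(\D)$ and therefore on $\D$; having $f^{m_k}\to\mathrm{id}$ forces $f$ to be injective and (via a limit of $f^{m_k-1}$ serving as an inverse) surjective, i.e.\ $f\in\mathcal{A}(\D)$; (iii) the fixed-point-free elements of $\mathcal{A}(\D)$ are then handled directly, e.g.\ by conjugating to the half-plane where they become $z\mapsto \lambda z$ with $\lambda>1$ or $z\mapsto z\pm 1$, for which $f^n\to w_0$ locally uniformly is explicit (so no subsequence tends to the identity, closing case (ii)). A smaller inaccuracy in the same step: the intersection of the closed horodisks containing a fixed base point $z_0$ is the smallest closed horodisk containing $z_0$, not $\{w_0\}$; to pin the constant at $w_0$ you must first know it lies on $\partial\D$ (which is exactly point (i)) and then use $\overline{H}\cap\partial\D=\{w_0\}$. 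With (i)--(iii) inserted, your outline becomes the standard complete proof.
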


The location of the Denjoy-Wolff point $w_0$ then leads to the classification of non-trivial finite Blaschke products as described in the Introduction. Recall that in both the elliptic and hyperbolic cases, we have $|B'(w_0)| <1$. As the location of the Denjoy-Wolff point depends continuously on the parameters in the Blaschke product, it follows that both ellipticity and hyperbolicity are open conditions in parameter space, once we suitably provide a topology on parameter space. On the other hand, the condition of parabolicity requires the Denjoy-Wolff point to satisfy $B'(w_0)=1$ (observe that when $w_0 \in \partial \D$, the value of $B'(w_0)$ is real). It follows that parabolicity is a closed condition in parameter space. Thus it is reasonable to view the parabolic parameters as the boundary between the elliptic and hyperbolic parameters in parameter space.

\subsection{Hyperbolic Divided Differences}
\label{ss:hdd}

The complex pseudo-hyperbolic distance between $z,w \in \D$ is given by
\[ [z,w] = \frac{z-w}{1-\overline{w}z}.\]
Using this, we recall the notion of hyperbolic divided differences introduced by Baribeau, Rivard and Wegert \cite{BRW09}. For simplicity, we will just apply this to finite Blaschke products

\begin{definition}
\label{def:divdiff}
Let $B$ be a finite Blaschke product and let $w\in \D$. Then the first hyperbolic divided difference of $B$ is
\[ (\Delta_w B)(z) = \frac{ [B(z),B(w)]}{[z,w]} = \frac{ \frac{ B(z)-B(w) }{1-\overline{B(w)}B(z) } }{ \frac{z-w}{1-\overline{w}z} }. \]
For $n\geq 2$, the $n$'th hyperbolic divided difference of $B$ is
\[ (\Delta _w^n B)(z) = \frac{ [ (\Delta_w^{n-1} f)(z) , (\Delta_w^{n-1} B)(w) ] }{[z,w]}.   \]
\end{definition}

One of the key points here is that the set of Blaschke products is not closed under differentiation. However taking hyperbolic divided differences does preserve the set of Blaschke products.

\begin{theorem}[Theorem 2.4, \cite{Riv11}]
\label{thm:hypdiffbl}
Let $B$ be a finite Blaschke product of degree $d$. If $0\leq k \leq d$ and $w\in \D$ then $(\Delta_w^{d-k} B)$ is a Blaschke product of degree $d-k$.
\end{theorem}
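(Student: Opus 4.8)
The plan is to reduce the whole statement to the single step $n=1$ and then induct on the number of divided differences taken. By Definition \ref{def:divdiff} we have $(\Delta_w^n B) = \Delta_w(\Delta_w^{n-1}B)$, so once we know that $\Delta_w$ carries a Blaschke product of degree $m\ge 1$ to a Blaschke product whose degree is one less, the theorem follows by applying this repeatedly. The key lemma to isolate is therefore: \emph{if $B$ is a finite Blaschke product of degree $d\ge 1$ and $w\in\D$, then $\Delta_w B$ is a finite Blaschke product of degree $d-1$.}

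To prove the lemma I would invoke the standard characterization (see \cite{GMR18}) that a map which is holomorphic on $\D$, extends continuously to $\partial\D$, and is unimodular on $\partial\D$ is a finite Blaschke product, with degree equal to its winding number. First, holomorphy: writing
\[ (\Delta_w B)(z) = \frac{B(z)-B(w)}{z-w}\cdot\frac{1-\overline{w}z}{1-\overline{B(w)}B(z)}, \]
the apparent singularity at $z=w$ is removable, since the first factor extends to $B'(w)$ there, and the denominator $1-\overline{B(w)}B(z)$ has no zero in $\D$ because $B(\D)=\D$ forces $|B(z)|<1<1/|B(w)|$; hence $\Delta_w B$ is holomorphic on $\D$. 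For the boundary, the one fact I need is the elementary identity $|[a,b]|=1$ whenever $|a|=1$ and $b\in\D$, which drops out of expanding $|a-b|^2$ and $|1-\overline{b}a|^2$. Applying it to both $[B(z),B(w)]$ and $[z,w]$, and using $|B(z)|=1$ for $|z|=1$, gives $|(\Delta_w B)(z)|=1$ on $\partial\D$, so $\Delta_w B$ is a finite Blaschke product.

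It remains to compute the degree, which I would do with the argument principle. As $z$ traverses $\partial\D$ once, $B(z)$ winds $d$ times about the origin, so the automorphism $u\mapsto[u,B(w)]$ makes $[B(z),B(w)]$ wind $d$ times, while $[z,w]$ winds exactly once; hence $\Delta_w B=[B(z),B(w)]/[z,w]$ has winding number, and therefore degree, $d-1$. The theorem then follows by induction: the intermediate products $\Delta_w^{j}B$ remain genuine Blaschke products, each value $(\Delta_w^{j}B)(w)$ lies in $\D$ (being the value of a non-constant Blaschke product at an interior point) so that the next divided difference is well defined, and the degree drops by exactly one at every application. I expect the degree count to be the main obstacle: one must make the winding-number bookkeeping precise and confirm that no zeros or poles of $\Delta_w B$ reach $\partial\D$ (guaranteed by the unimodularity above), so that the argument principle genuinely delivers $d-1$ rather than merely a bound. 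As an alternative to the direct count, one could identify $\Delta_w B$ with a Schur transform of $B$, for which the degree drop is classical.
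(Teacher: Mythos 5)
Your proof is sound, but there is nothing in the paper to compare it against: the paper imports this statement from Rivard \cite{Riv11} without proof (and explicitly notes that Rivard's Theorem 2.4 says more, including a converse). Rivard's own argument sits inside the theory of hyperbolic divided differences built up in \cite{BRW09}, where $\Delta_w$ is essentially a Schur-algorithm reduction step and the degree drop is the classical termination of the Schur algorithm on finite Blaschke products --- exactly the alternative you gesture at in your last sentence. Your route is more elementary and self-contained: reduce to the one-step lemma, verify Blaschke-ness via the standard characterization (holomorphic on $\D$, continuous up to $\partial\D$, unimodular there; see \cite{GMR18}), and count the degree by winding numbers. All the individual steps check out: the singularity at $z=w$ is removable; $1-\overline{B(w)}B(z)$ is zero-free on all of $\overline{\D}$ since $|B(z)|\leq 1 < 1/|B(w)|$ (and when $B(w)=0$ there is nothing to check), so $\Delta_w B$ is in fact rational with no poles on $\overline{\D}$; the identity $|[a,b]|=1$ for $|a|=1$, $b\in\D$ gives unimodularity on the circle; and since a disk automorphism restricts to a degree-one circle map, the composed loops wind $d$ and $1$ times respectively, so the quotient winds $d-1$ times, which for a finite Blaschke product is the degree. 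Your induction hygiene is also right: each intermediate $\Delta_w^{j}B$ has positive degree until the last step, so $(\Delta_w^{j}B)(w)\in\D$ and the next difference is defined, with the final step producing a unimodular constant (the degree-zero case, which your winding argument also covers). A shortcut worth knowing for the degree count: the zeros of $(B(z)-B(w))/(z-w)$ in $\D$ are the solutions of $B(z)=B(w)$, of which there are exactly $d$ counted with multiplicity because $B$ is a proper $d$-to-$1$ self-map of $\D$, and dividing by $z-w$ removes precisely one; the two M\"obius factors contribute no zeros or poles in $\D$, so no argument-principle bookkeeping is needed.

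One caveat concerns the statement itself rather than your proof. As printed, it asserts that $\Delta_w^{d-k}B$ has degree $d-k$; but each application of $\Delta_w$ lowers the degree by exactly one --- which is what you prove, and what the paper itself uses when it treats $\Delta_w B$ of a cubic as a \emph{quadratic} Blaschke product in the proof of Theorem \ref{thm:structure} --- so after $d-k$ applications the degree is $k$. The printed statement is evidently a misprint for either ``$\Delta_w^{k}B$ is a Blaschke product of degree $d-k$'' or ``$\Delta_w^{d-k}B$ is a Blaschke product of degree $k$''; your argument establishes the correct version, and the literal printed version is not provable since it is self-inconsistent except when $k=d/2$.
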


We note that \cite[Theorem 2.4]{Riv11} says somewhat more and, in particular, includes a converse that we do not need here.

\begin{definition}[Definition 3.1, \cite{Riv11}]
    \label{def:hypderiv}
Let $f$ be a finite Blaschke product and let $n\geq 1$. The hyperbolic derivative of order $n$ of $B$ at $z\in \D$, denoted by $H^nB(z)$, is defined by
\[ H^nB(z) = \Delta_z^n B(z).\]
\end{definition}

When $n=1$, we have by \cite[Theorem 3.2 (1)]{Riv11}
\begin{equation} 
\label{eq:H1}
H^1 B(z) = \frac{ B'(z) (1-|z|^2)}{1-|B(z)|^2}, 
\end{equation}
for $z\in \D$, which agrees with the hyperbolic derivative that is well-known in the literature, see for example \cite{BM07}. Moreover, when $n=2$ and $B$ is a non-trivial Blaschke product, we have by \cite[Theorem 3.2 (2)]{Riv11} that
\begin{equation} 
\label{eq:H2}
H^2B(z) = \frac{1}{2(1-|H^1B(z)|^2)} \left ( \frac{ B''(z)(1-|z|^2)^2}{1-|B(z)|^2} - \frac{ 2\overline{z} B'(z) (1-|z|^2)}{1-|B(z)|^2} + \frac{ 2\overline{B(z)} B'(z)^2 (1-|z|^2)^2 }{(1-|B(z)|^2)^2} \right ).  
\end{equation}

We make the following definition.

\begin{definition}
    \label{def:hypinflpt}
Let $B$ be a finite Blaschke product.
\begin{enumerate}[(a)]
\item A point $z\in \D$ is called a hyperbolic critical point if $H^1B(z) = 0$.
\item A point $z\in \D$ is called a hyperbolic inflection point if $H^2B(z) = 0$.
\end{enumerate}
\end{definition}

We could make the definition for higher order hyperbolic critical points in the obvious way, but we do not need this here. It follows immediately from \eqref{eq:H1} that $z$ is a hyperbolic critical point for $B$ if and only if $z$ is a critical point. On the other hand, we can see from \eqref{eq:H2} that a solution of $B''(z) = 0$ will typically not be a hyperbolic inflection point (consider $B(z) = z^3$ for a simple example where it is).

An important property of hyperbolic derivatives is their conformal invariance.

\begin{theorem}[Theorem 3.4, \cite{Riv11}]
\label{thm:confinv}
Let $B$ be a finite Blaschke product, let $\psi,\phi \in \mathcal{A}(\D)$, and let $n\geq 1$. Then
\[ |H^n (\psi \circ B \circ \phi)(z) | = |H^n(B)(\phi(z))|, \]
for $z\in \D$.    
\end{theorem}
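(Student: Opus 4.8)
The plan is to prove the identity by establishing, more strongly, a transformation rule for the \emph{complex} divided differences $\Delta_w^n$ under pre- and post-composition, and then to read off the conclusion by setting $w=z$ and taking moduli. The whole argument rests on two elementary properties of the complex pseudo-hyperbolic distance $[z,w]=\frac{z-w}{1-\overline{w}z}$. The first is a transformation rule: for any $A\in\mathcal{A}(\D)$, writing $A(\zeta)=\mu\frac{\zeta-a}{1-\overline{a}\zeta}$, a direct computation (using the standard factorizations of $A(z)-A(w)$ and of $1-\overline{A(w)}A(z)$) gives
\[ [A(z),A(w)] = \mu\,\frac{1-a\overline{w}}{1-\overline{a}w}\,[z,w] =: \lambda_A(w)\,[z,w], \]
where the factor $\lambda_A(w)$ is unimodular and, crucially, depends only on the second argument $w$. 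The second is homogeneity: for any unimodular $\lambda$ one has $[\lambda p,\lambda q]=\lambda[p,q]$, since $|\lambda|^2=1$ cancels in the denominator. Theorem~\ref{thm:hypdiffbl} guarantees that every divided difference appearing below is a genuine Blaschke product, so all the brackets are well defined and the recursion in Definition~\ref{def:divdiff} makes sense.

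First I would treat post-composition. Setting $G=\psi\circ B$ and applying the transformation rule to the numerator of $\Delta_w G$, the $z$-independent unimodular factor $u_\psi(w):=\lambda_\psi\big(B(w)\big)$ factors out, giving $(\Delta_w G)(z)=u_\psi(w)(\Delta_w B)(z)$. Because $u_\psi(w)$ is common to both slots of the next bracket, homogeneity lets it pass unchanged through every further divided difference, so an easy induction on $n$ yields $(\Delta_w^n(\psi\circ B))(z)=u_\psi(w)(\Delta_w^n B)(z)$, with no accumulation of powers.

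Next I would treat pre-composition. Setting $H=B\circ\phi$ and rewriting the numerator of $\Delta_w H$ as $(\Delta_{\phi(w)}B)(\phi(z))\,[\phi(z),\phi(w)]$, the transformation rule converts $\frac{[\phi(z),\phi(w)]}{[z,w]}$ into the unimodular factor $v_\phi(w):=\lambda_\phi(w)$, so $(\Delta_w H)(z)=v_\phi(w)\,(\Delta_{\phi(w)}B)(\phi(z))$. The key difference from the post-composition case is that this same ratio reappears at each level of the recursion: combining homogeneity (to pull out the common factor) with the definition of $\Delta_{\phi(w)}^n B$ (to reintroduce $[\phi(z),\phi(w)]$) gives, by induction, $(\Delta_w^n(B\circ\phi))(z)=v_\phi(w)^n\,(\Delta_{\phi(w)}^n B)(\phi(z))$, with one extra power of $v_\phi(w)$ accruing at each step.

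Finally I would compose the two reductions: applying the post-composition identity to $B\circ\phi$ and then the pre-composition identity gives
\[ (\Delta_w^n(\psi\circ B\circ\phi))(z) = u_\psi(w)\,v_\phi(w)^n\,(\Delta_{\phi(w)}^n B)(\phi(z)). \]
These identities hold for $z\neq w$, where every bracket is literally defined; letting $w\to z$ and using that the divided-difference Blaschke products extend continuously to the diagonal (so that $\Delta_z^n(\cdot)(z)=H^n(\cdot)(z)$ by Definition~\ref{def:hypderiv}) yields $H^n(\psi\circ B\circ\phi)(z)=u_\psi(z)\,v_\phi(z)^n\,H^n B(\phi(z))$. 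Since $u_\psi(z)$ and $v_\phi(z)$ are unimodular, taking absolute values gives the claimed equality. I expect the only real obstacle to be the bookkeeping in the two inductions: one must verify at each level that the accumulated prefactor is unimodular and enters \emph{both} slots of the next bracket identically, which is precisely why it matters that $\lambda_A(w)$ depends on the second argument alone, so that homogeneity applies cleanly.
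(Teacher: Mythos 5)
The paper does not prove this statement itself---it is imported from Rivard \cite{Riv11}---so there is no internal proof to compare against; your argument is correct, and it is essentially the standard one: the two facts that $[A(z),A(w)]=\lambda_A(w)[z,w]$ with $\lambda_A(w)$ unimodular and independent of $z$, and that $[\lambda p,\lambda q]=\lambda[p,q]$ for $|\lambda|=1$, carry the whole proof, and your two inductions (a single factor $u_\psi(w)$ for post-composition, an accumulating factor $v_\phi(w)^n$ for pre-composition) are exactly how the invariance is obtained in \cite{Riv11} and \cite{BRW09}. The only point worth tightening is the passage to the diagonal: rather than letting $w\to z$, fix $w$ and observe that both sides of your identity are holomorphic in $z$ (each divided difference being a Blaschke product by Theorem \ref{thm:hypdiffbl}), so the identity valid for $z\neq w$ extends to $z=w$, which is precisely the evaluation $\Delta_w^n(\cdot)(w)=H^n(\cdot)(w)$ required by Definition \ref{def:hypderiv}.
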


As an immediate consequence of Theorem \ref{thm:confinv}, we have the following corollary.

\begin{corollary}
\label{cor:confinv}
Let $B$ be a finite Blaschke product and let $\psi,\phi \in \mathcal{A}(\D)$. Then $z$ is a hyperbolic critical point, or hyperbolic inflection point respectively, of $B$ if and only if $\phi^{-1}(z)$ is a hyperbolic critical point, or hyperbolic inflection point respectively, of $\psi \circ B \circ \phi$.
\end{corollary}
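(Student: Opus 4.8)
The plan is to read the corollary off directly from Theorem \ref{thm:confinv}, so the proof will be a short bookkeeping argument with essentially no computation. First I would record the preliminary observation that $\psi \circ B \circ \phi$ is again a finite Blaschke product of the same degree as $B$: pre- and post-composing a Blaschke product with an element of $\mathcal{A}(\D)$ (a degree one Blaschke product) neither introduces poles in $\D$ nor changes the degree, so the notions of hyperbolic critical point and hyperbolic inflection point from Definition \ref{def:hypinflpt} are meaningful for $\psi \circ B \circ \phi$, and in particular $H^1$ and $H^2$ of this map are defined wherever they are for $B$.

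The one real idea is that hyperbolic critical points and hyperbolic inflection points are each characterised by the \emph{vanishing} of a hyperbolic derivative, namely $H^1 B$ and $H^2 B$ respectively, and that a complex quantity vanishes if and only if its modulus vanishes. I would therefore apply Theorem \ref{thm:confinv} at the point $w = \phi^{-1}(z)$, which lies in $\D$ because $\phi^{-1} \in \mathcal{A}(\D)$. Since $\phi(w) = z$, the theorem gives
\[ |H^n(\psi \circ B \circ \phi)(\phi^{-1}(z))| = |H^n(B)(z)| \]
for $n = 1$ and $n = 2$.

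It remains only to unwind the definitions. For $n=1$, the right-hand side is zero precisely when $H^1 B(z) = 0$, that is, when $z$ is a hyperbolic critical point of $B$, while the left-hand side is zero precisely when $H^1(\psi \circ B \circ \phi)(\phi^{-1}(z)) = 0$, that is, when $\phi^{-1}(z)$ is a hyperbolic critical point of $\psi \circ B \circ \phi$; the displayed equality forces these two conditions to coincide. Taking $n=2$ gives the identical equivalence with ``hyperbolic critical point'' replaced by ``hyperbolic inflection point''. I do not expect any genuine obstacle here, since all of the analytic content is already carried by Theorem \ref{thm:confinv}; the only points requiring care are the choice of evaluation point $w=\phi^{-1}(z)$ and the trivial but essential remark that, for a single complex number, being zero and having zero modulus are the same condition.
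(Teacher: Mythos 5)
Your proposal is correct and matches the paper's intent exactly: the paper offers no written proof, stating the corollary as an immediate consequence of Theorem \ref{thm:confinv}, and your argument---evaluate the theorem at $w=\phi^{-1}(z)$ so that $|H^n(\psi\circ B\circ\phi)(\phi^{-1}(z))|=|H^n(B)(z)|$, then use that a complex number vanishes iff its modulus does---is precisely the intended unpacking of that immediacy. No gaps; your preliminary remark that $\psi\circ B\circ\phi$ is again a finite Blaschke product is the right (and only) hypothesis check needed.
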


\subsection{The Schur-Cohn Algorithm}
\label{ss:schurcohn}

The Schur-Cohn Algorithm applies to complex polynomials and determines how many zeros lie within $\D$ or $\D^*$. We refer to \cite[Section 6.8]{Hen74} for more details than we provide here. 

In this section, we suppose that $p$ is a polynomial with complex coefficients. If
\[ p(z) = a_dz^d +a_{d-1}z^{d-1} +\ldots +a_1z + a_0,\]
where $d\geq 0$ and $a_i \in \C$ for $i=0,\ldots, d$, then the reciprocal polynomial of $p$ is defined by
\[ p^*(z) = \overline{a_0}z^d + \overline{a_1}z^{d-1} + \ldots + \overline{a_{d-1}}z + \overline{a_0}.\]
The reciprocal polynomial of a polynomial of degree $d$ is always a polynomial of degree at most $d$. In the cases we are interested in, $a_0$ will not vanish, and so the reciprocal polynomial will also be of degree $d$.

A polynomial is called self-inversive if there exists $\mu$ with $|\mu| = 1$ such that $p^*(z) = \mu p(z)$. It follows that a self-inversive polynomial has a relationship between its coefficients given by $a_k = \mu \overline{a_{n-k}}$ for $k=0,\ldots, d$.

If $p$ has zeros at $w_1,\ldots, w_d$, repeated according to multiplicity, then $p^*$ has zeros at $1/\overline{w_i}$ for $i=1,\ldots, d$, again repeated according to multiplicity.

It is worth pointing out that if $A\in \mathcal{A}(\D)$, then 
\[ A(z) = \mu \frac{p(z)}{p^*(z)},\]
for $p(z) = z-w$ and $|\mu|=1$. As a consequence, every finite Blaschke product of degree $d$ is of the form
\begin{equation} 
\label{eq:selfinv}
B(z) = \mu \frac{p(z)}{p^*(z)},
\end{equation}
where $p$ is a degree $d$ polynomial and $|\mu|=1$. Returning to our build-up for the Schur-Cohn algorithm, we recall the following definition.

\begin{definition}
    \label{def:schur}
If $p$ is a polynomial of degree $d\geq 1$, the Schur transform of $p$ is the degree $d-1$ polynomial defined by
\[ Tp(z) = \overline{p(0)}p(z) - \overline{ p^*(0)}p^*(z) = \sum_{k=0}^{d-1} \left ( \overline{a_0}a_k - a_d \overline{a_{d-k}} \right ) z^k.\]
The iterated Schur transforms are then defined for $n\geq 2$ inductively via $T^np = T(T^{n-1}p)$.
\end{definition}

The part of the Schur-Cohn Algorithm that we will use then reads as follows.

\begin{theorem}[Schur-Cohn Algorithm, Theorem 6.8b, \cite{Hen74}]
    \label{thm:schurcohn}
Let $p$ be a complex polynomial of degree $d$ and for $k\geq 1$, set $\delta_k = T^kp(0)$. Then all the roots of $p$ lie in $\D^*$ if and only if $\delta_k > 0$ for $k=1,\ldots, d$.
\end{theorem}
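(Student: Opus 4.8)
This is the classical Schur--Cohn criterion, quoted from \cite{Hen74}, so strictly speaking one only needs to cite it; but here is the argument I would give. The plan is to induct on the degree $d$, with the inductive step comparing the zeros of $p$ with those of its Schur transform $Tp$, a polynomial of degree $d-1$. The base case $d=1$ is immediate: for $p(z)=a_1z+a_0$ the unique root $-a_0/a_1$ lies in $\D^*$ precisely when $|a_0|>|a_1|$, that is, precisely when $\delta_1=Tp(0)=|a_0|^2-|a_1|^2>0$.

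For the inductive step, the key point is that on $\partial\D$ we have $1/\overline{z}=z$, so $p^*(z)=z^d\overline{p(z)}$ and hence $|p^*(z)|=|p(z)|$ there. Since $Tp=\overline{a_0}\,p-a_d\,p^*$, when $\delta_1>0$ (equivalently $|a_0|>|a_d|$) and $p$ has no zero on $\partial\D$ we get $|a_d\,p^*(z)|<|\overline{a_0}\,p(z)|$ for all $z\in\partial\D$, so Rouch\'e's theorem shows that $Tp$ and $\overline{a_0}\,p$, and therefore $Tp$ and $p$, have the same number of zeros inside $\D$. To handle $\partial\D$ itself, observe that when $\delta_1>0$ the polynomials $p$ and $Tp$ have exactly the same zeros there: if $p(\zeta)=0$ with $|\zeta|=1$ then $p^*(\zeta)=0$, whence $Tp(\zeta)=0$; conversely $Tp(\zeta)=0$ gives $|a_0|\,|p(\zeta)|=|a_d|\,|p(\zeta)|$, forcing $p(\zeta)=0$ because $|a_0|\neq|a_d|$. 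Putting these together, when $\delta_1>0$ all roots of $p$ lie in $\D^*$ if and only if all roots of $Tp$ do.

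With this equivalence the induction closes from both sides. For the forward implication, if every root $w_i$ of $p$ satisfies $|w_i|>1$ then $|a_0|=|a_d|\prod_i|w_i|>|a_d|$, so $\delta_1>0$; the equivalence pushes ``all roots in $\D^*$'' down to $Tp$, and the inductive hypothesis applied to $Tp$ yields $\delta_2,\dots,\delta_d>0$. For the converse, the inductive hypothesis gives that all roots of $Tp$ lie in $\D^*$, and then $\delta_1>0$ together with the equivalence lifts this back to $p$. The step I expect to require the most care is the bookkeeping of degrees: $Tp$ may have actual degree strictly less than $d-1$, so to keep the iterated quantities $\delta_k=T^kp(0)$ and the reciprocal-polynomial identities consistent one must fix the convention that $T^kp$ is treated as having formal degree $d-k$ (equivalently, count zeros on the Riemann sphere, where $\infty\in\D^*$), and check at each stage that the strict sign $\delta_k>0$ is exactly what excludes zeros on $\partial\D$. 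This degenerate-degree and boundary analysis is the real content; the Rouch\'e comparison at the heart of the step is routine.
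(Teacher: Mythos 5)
The paper offers no proof of this statement at all --- it is quoted verbatim as Theorem 6.8b of the cited reference \cite{Hen74} --- so there is nothing for your argument to diverge from, and your Rouch\'e-based induction (compare $p$ with $Tp=\overline{a_0}\,p-a_d\,p^*$ on $\partial\D$ using $|p^*|=|p|$ there, handle boundary zeros by noting they are shared when $\delta_1>0$, then close the induction in both directions) is precisely the classical proof of the Schur--Cohn criterion, essentially as it appears in the cited source, and it is correct. Your closing caveat about formal degree is also the right and genuinely necessary point of care: e.g.\ for $p(z)=z^2+c$ with $|c|>1$ the transform $Tp$ is the nonzero constant $|c|^2-1$, and only the convention that $T^kp$ has formal degree $d-k$ (with missing roots counted at $\infty\in\D^*$) makes $\delta_2=(|c|^2-1)^2>0$ come out as the theorem requires.
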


\section{Hyperbolic Inflection Points}
\label{s:inflection}

\subsection{Normal forms of cubic Blaschke products}

Here we establish the normal forms for our cubic Blaschke products.

\begin{proof}[Proof of Proposition \ref{prop:normal form}]
Starting with an arbitrary cubic Blaschke product $B$, by Corollary \ref{cor:confinv} we may conjugate $B$ by an element of $\mathcal{A}(\D)$ to move its two critical points in $\D$ to $\pm \epsilon$ on the real axis. Then, with a slight abuse of notation, we call the new Blaschke product $B$ and, following \cite[Section 2]{Sin} may write it in un-factored form as
\[ B(z) = -\mu \frac{ z^3 - r_2z^2 + r_1z - r_0 }{ \overline{r_0}z^3 - \overline{r_1}z^2 + \overline{r_2}z -1 }.\]
Solving $B'(z)=0$ leads to the self-inversive equation
\[ (\overline{r_0}r_2 - \overline{r_1})z^4 + (2\overline{r_2} - 2r_1\overline{r_0})z^3 + (-3-|r_2|^2+|r_1|^2 + 3|r_0|^2)z^2 + (2r_2 - 2\overline{r_1}r_0)z + (r_0\overline{r_2} - r_1) = 0. \]
As the four roots of this equation must be $\pm\epsilon$ and $\pm\epsilon^{-1}$, this means the coefficients of $z^3$ and $z$ must vanish, that is, we must have $r_2 = \overline{r_1}r_0$. Substituting this in and simplifying, we obtain
\[ \overline{r_1}z^4 + ( 3-|r_1|^2)z^2 + r_1 = 0,\]
where $|r_1|<1$. In order for the four roots of this equation to be real, $r_1$ must be real and negative. Then setting $r_1 = -s^2$ yields part~(a).

For part~(b), set $R(z) = e^{i\theta}$ and suppose $B$ is given as in part~(a). Then
\begin{align*}
(R^{-1} \circ B \circ R)(z) &= (e^{-i\theta} \mu) \left ( \frac{ e^{3i\theta} z^3 - s_0^2r_0 e^{2i\theta}z^2 - s_0^2e^{i\theta}z +r_0}{\overline{r_0}e^{3i\theta}z^3 - s_0^2e^{2i\theta}z^2 - s_0^2 \overline{r_0}e^{i\theta}z +1} \right )\\
&= \frac{ \mu e^{2i\theta}z^3 - s_0^2r_0\mu e^{i\theta}z^2 - s_0^2 \mu z + r_0 \mu e^{-i\theta}}{\overline{r_0}e^{3i\theta}z^3-s_0^2e^{2i\theta}z^2 - s_0^2 \overline{r_0}e^{i\theta} z +1}.
\end{align*}
As $|\mu| = 1$, we have $\mu = e^{i\phi}$ for some $\phi \in [0,2\pi )$. We choose $\theta = -\phi /2$ and set $r = r_0e^{-3i\theta}$ to obtain
\begin{align*} 
(R^{-1} \circ B \circ R)(z) &= \frac{ z^3 - \left( s_0e^{i\theta}\right )^2rz^2 - \left ( \overline{s_0e^{i\theta}} \right )^2z + r}{\overline{r}z^3 + \left ( s_0e^{i\theta} \right )^2 z^2 - \left ( \overline{s_0e^{i\theta}} \right )^2 \overline{r} z +1 }.  
\end{align*}
Setting $s = \left ( s_0e^{i\theta} \right )^2$ yields part~(b).

For the uniqueness claim, if $A_1,A_2$ are two elements of $\mathcal{A}(\D)$ which conjugate a cubic Blaschke product $f$ to \eqref{eq:normal form}, then $A_2\circ A_1^{-1}$ is an element of $\mathcal{A}(\D)$ which either fixes the two critical points of $B$ or switches them. In the first case, $A_2\circ A_1^{-1}$ must be the identity and in the second case $A_2\circ A_1^{-1}$ must be $-z$. As $B$ given by \eqref{eq:normal form} satisfies
\[ B_1(z) := -B(-z) = \frac{ z^3 +srz^2 - \overline{s}z-r}{-\overline{r}z^3 -sz^2 +\overline{sr}z+1}, \]
we see that $(r,s)$ and $(-r,s)$ are identified via $\mathcal{A}(\D)$ in parameter space.
\end{proof}

\subsection{Location of the hyperbolic inflection point}

In this section, we prove our structure theorem on cubic Blaschke products.

\begin{proof}[Proof of Theorem \ref{thm:structure}]
By Proposition \ref{prop:normal form}~(a), we will assume that $B$ has been conjugated to move its critical points to $\pm \epsilon$ on the real axis and thus that $B$ has the form
\[ B(z) = \mu \left ( \frac{ z^3 - s_0^2r_0 z^2 - s_0^2z + r_0}{\overline{r_0}z^3 - s_0^2z^2 - s_0^2 \overline{r_0}z  + 1} \right ), \]
for $|\mu|=1$, $s_0 \in [0,1)$, and $r_0 \in \D$. Our aim is to show that $B$ has only one hyperbolic critical point in $\D$ and that it lies at the origin. Corollary \ref{cor:confinv} then establishes the result for arbitrary cubic Blaschke products.

Rather than computing solutions of $H^2B(z) = 0$ directly by using \eqref{eq:H2}, our strategy is to use the fact, via Theorem \ref{thm:hypdiffbl}, that $\Delta_w B(z)$ is a degree two Blaschke product. By \cite[Theorem 3.1]{CFY17}, if $f$ is a degree two Blaschke product, then $\Delta_wf(w) =0$ if and only if $w$ is the hyperbolic midpoint of the roots of $f$. Thus to solve $\Delta^2_w B (w) = 0$, we need $w$ to be the hyperbolic midpoint of the roots of $\Delta_wB(z)$ when we set $z=w$.

We begin by calculating $B(z)-B(w)$ which yields

\begin{align*}
B(z) - B(w) &= \frac{\mu\left(z^{3}-s_0^{2}r_{0}z^{2}-s_0^{2}z+r_{0}\right)}{\overline{r_{0}}z^{3}-s_0^{2}z^{2}-s_0^{2}\overline{r_{0}}z+1}-\frac{\mu\left(w^{3}-s_0^{2}r_{0}w^{2}-s_0^{2}w+r_{0}\right)}{\overline{r_{0}}w^{3}-s_0^{2}w^{2}-s_0^{2}\overline{r_{0}}w+1} \\
&= \mu(\overline{r_{0}}z^{3}-s_0^{2}z^{2}-s_0^{2}\overline{r_{0}}z+1)^{-1}(\overline{r_{0}}w^{3}-s_0^{2}w^{2}-s_0^{2}\overline{r_{0}}w+1)^{-1} \\
& \hskip0.2in \cdot \left [ (z^{3}-s_0^{2}r_{0}z^{2}-s_0^{2}z+r_{0})(\overline{r_{0}}w^{3}-s_0^{2}w^{2}-s_0^{2}\overline{r_{0}}w+1) \right .\\
& \hskip0.4in \left . - (w^{3}-s_0^{2}r_{0}w^{2}-s_0^{2}w+r_{0})(\overline{r_{0}}z^{3}-s_0^{2}z^{2}-s_0^{2}\overline{r_{0}}z+1) \right ]
\end{align*}

Multiplying out the terms in the square brackets above and simplifying yields

\begin{align*}
B(z) - B(w) &= \mu(\overline{r_{0}}z^{3}-s_0^{2}z^{2}-s_0^{2}\overline{r_{0}}z+1)^{-1}(\overline{r_{0}}w^{3}-s_0^{2}w^{2}-s_0^{2}\overline{r_{0}}w+1)^{-1} \\
& \hskip0.2in \left [ s_0^{2}|r_0|^2z^{2}w^{2}\left(z-w\right)+s_0^{2}\overline{r_{0}}zw\left(z^{2}-w^{2}\right)-|r_0|^2\left(z^{3}-w^{3}\right)-s_0^{2}z^{2}w^{2}\left(z-w\right) \right .\\
& \hskip0.4in -s_0^{4}zw\left(z-w\right)+s_0^{2}r_{0}\left(z^{2}-w^{2}\right)-s_0^{2}\overline{r_{0}}zw\left(z^{2}-w^{2}\right)+s_0^{4}|r_0|^2zw\left(z-w\right) \\
& \hskip0.4in \left . + s_0^{2}|r_0|^2\left(z-w\right)+\left(z^{3}-w^{3}\right)-s_0^{2}r\left(z^{2}-w^{2}\right)-s_0^{2}\left(z-w\right) \right ]\\
&= \mu(z-w)(\overline{r_{0}}z^{3}-s_0^{2}z^{2}-s_0^{2}\overline{r_{0}}z+1)^{-1}(\overline{r_{0}}w^{3}-s_0^{2}w^{2}-s_0^{2}\overline{r_{0}}w+1)^{-1} \\
& \hskip0.2in \cdot \left [ z^{2}\left(w^{2}s_0^{2}\left(|r_0|^2-1\right)-\left(|r_0|^2-1\right)\right) \right .\\
& \hskip0.4in +z\left(-s_0^{4}w\left(1-|r_0|^2\right)+w\left(1-|r_0|^2\right)\right) \\
& \hskip0.4in \left . +\left(w^{2}\left(1-|r_0|^2\right)-s_0^{2}\left(1-|r_0|^2\right)\right)  \right ].
\end{align*}

Now, using this and simplifying again, we compute that the first hyperbolic divided difference is
\begin{align*}
\left(\Delta_{w}B\right)\left(z\right) & =\frac{B\left(z\right)-B\left(w\right)}{1-\overline{B\left(w\right)}B\left(z\right)}\cdot\frac{1-\overline{w}z}{z-w}\\
&= \frac{B(z)-B(w)}{z-w} \cdot \frac{1-\overline{w}z}{1-\overline{B(w)}B(z)}\\
&= \mu\left ( \frac{1-\overline{w}z}{1-\overline{B(w)}B(z)}\right )(\overline{r_{0}}z^{3}-s_0^{2}z^{2}-s_0^{2}\overline{r_{0}}z+1)^{-1}(\overline{r_{0}}w^{3}-s_0^{2}w^{2}-s_0^{2}\overline{r_{0}}w+1)^{-1} \\
& \hskip0.2in \cdot (|r_0|^2-1)\left [ z^2\left(w^{2}s_0^{2}-1\right)+zw\left(1-s_0^4\right)+\left(w^2-s_0^2\right)  \right ].
\end{align*}

Next, we will solve $\left(\Delta_{w}B\right)\left(z\right)=0$ to find the roots of the hyperbolic divided difference. First note that the factor $$\frac{1-\overline{w}z}{1-\overline{B\left(w\right)}B\left(z\right)}$$ will not have a zero of the hyperbolic divided difference since both $w$ and $z$ lie inside the unit disk. That leaves finding the zeros of the other factor, and namely solving 
\begin{align*}
    z^2\left( \left(w^{2}s_0^{2}-1\right)\left(|r_0|^2-1\right)\right)+z\left(w\left(1-s_0^4\right)\left(1-|r_0|^2\right)\right)+\left(\left(w^2-s_0^2\right)\left(1-|r_0|^2\right)\right) =& 0 \\
    \left(1-|r_0|^2\right)\left( z^2 \left(1-w^{2}s_0^{2}\right)+zw\left(1-s_0^4\right)+(w^2-s_0^2\right) =& 0
\end{align*}
to find the roots of $\left(\Delta_{w}B\right)\left(z\right)$. We can use the quadratic formula:

\begin{align*}
    z =& \frac{-w\left(1-s_0^{4}\right)\pm\left(\left(w\left(1-s_0^{4}\right)\right)^2-4\left(1-w^{2}s_0^{2}\right)\left(w^{2}-s_0^{2}\right)\right)^{1/2}}{2\left(w^{2}s_0^{2}-1\right)}\\
     =& \frac{-w\left(1-s_0^{4}\right)\pm\left(w^{2}\left(1-s_0^{4}\right)^{2}+4\left(w^{2}s_0^{2}-1\right)\left(w^{2}-s_0^{2}\right)\right)^{1/2}}{-2\left(w^{2}s_0^{2}-1\right)}\\
     =& \frac{-w\left(1-s_0^{4}\right)\pm\left(w^{2}s_0^{8}+4w^{4}s_0^{2}+4s_0^{2}-3w^{2}-6w^{2}s_0^{4}\right)^{1/2}}{2\left(1-w^{2}s_0^{2}\right)}.
\end{align*}

Therefore the roots of the hyperbolic divided difference are 
\[
    p=\frac{-w\left(1-s_0^{4}\right)+\left(w^{2}s_0^{2}+4w^{4}s_0^{2}+4s_0^{2}-3w^{2}-6w^{2}s_0^{4}\right)^{1/2}}{2\left(1-w^{2}s_0^{2}\right)}
\] 
and 
\[
    q=\frac{-w\left(1-s_0^{4}\right)-\left(w^{2}s_0^{2}+4w^{4}s_0^{2}+4s_0^{2}-3w^{2}-6w^{2}s_0^{4}\right)^{1/2}}{2\left(1-w^{2}s_0^{2}\right)}.
\]

To find the hyperbolic midpoint of $p$ and $q$, we will use \eqref{eq:hypmidpt}.
To start we compute that

\begin{align*}
    p+q 
     =\frac{-w\left(1-s_0^{4}\right)}{1-w^{2}s_0^{2}}
\end{align*}

and 

\begin{align*}
    pq & =\left(\frac{-w\left(1-s_0^{4}\right)+\left(w^{2}s_0^{8}+4w^{4}s_0^{2}+4s_0^{2}-3w^{2}-6w^{2}s_0^{4}\right)^{1/2}}{2\left(1-w^{2}s_0^{2}\right)}\right)\\
    &\hskip0.4in \cdot \left(\frac{-w\left(1-s_0^{4}\right)-\left(w^{2}s_0^{8}+4w^{4}s_0^{2}+4s_0^{2}-3w^{2}-6w^{2}s_0^{4}\right)^{1/2}}{2\left(1-w^{2}s_0^{2}\right)}\right)\\
    & =\frac{w^{2}\left(1-s_0^{4}\right)^{2}-\left(w^{2}s_0^{8}+4w^{4}s_0^{2}+4s_0^{2}-3w^{2}-6w^{2}s_0^{4}\right)}{4\left(1-w^{2}s_0^{2}\right)^{2}}\\
    & =\frac{4w^{2}+4w^{2}s_0^{2}-4w^{4}s_0^{2}-4s_0^{2}}{4\left(1-w^{2}s_0^{2}\right)^{2}}\\
    & =\frac{\left(w^{2}-s_0^{2}\right)\left(1-w^{2}s_0^{2}\right)}{\left(1-w^{2}s_0^{2}\right)^{2}}\\
    & =\frac{w^{2}-s_0^{2}}{1-w^{2}s_0^{2}}.
\end{align*}

Thus the hyperbolic midpoint $c$ of $p$ and $q$ satisfies the equation

\begin{align*}
  &\left[\overline{\left(\frac{w^{2}-s_0^{2}}{1-w^{2}s_0^{2}}\right)}\left(\frac{-w\left(1-s_0^{4}\right)}{1-w^{2}s_0^{2}}\right)-\overline{\left(\frac{-w\left(1-s_0^{4}\right)}{1-w^{2}s_0^{2}}\right)}\right]c^{2}\\
  &\hskip0.2in +2\left(1-\left|\frac{w^{2}-s_0^{2}}{1-w^{2}s_0^{2}}\right|^{2}\right)c+\left(\frac{w^{2}-s_0^{2}}{1-w^{2}s_0^{2}}\right)\overline{\left(\frac{-w\left(1-s_0^{4}\right)}{1-w^{2}s_0^{2}}\right)}-\left(\frac{-w\left(1-s_0^{4}\right)}{1-w^{2}s_0^{2}}\right)  =0.  
\end{align*}
Clearing the denominators, we obtain
\begin{align*}
    &\left(-w\left(1-s_0^{4}\right)\overline{\left(w^{2}-s_0^{2}\right)}+\overline{w}\overline{\left(1-s_0^{4}\right)}\left(1-w^{2}s_0^{2}\right)\right)c^{2}\\
    &\hskip0.2in +2\left(\overline{\left(1-w^{2}s_0^{2}\right)}\left(1-w^{2}s_0^{2}\right)-\left(w^{2}-s_0^{2}\right)\overline{\left(w^{2}-s_0^{2}\right)}\right)c\\
    &\hskip0.4in +\left(w^{2}-s_0^{2}\right)\overline{\left(-w\left(1-s_0^{4}\right)\right)}+w\left(1-s_0^{4}\right)\overline{\left(1-w^{2}s_0^{2}\right)}  =0.
\end{align*}

After multiplying out the coefficients of powers of $c$ and factorizing, we obtain
\begin{align*}
\left(1-s_0^{4}\right)\left [ \left(1-\left|w\right|^{2}\right)\left(\overline{w}+s_0^{2}w\right)c^{2}+2\left(1-\left|w\right|^{4}\right)c+\left(w+s_0^{2}\overline{w}\right)\left(1-|w|^2\right) \right ]=0
\end{align*}

Now we can divide by the non-zero constants $\left(1-s^{4}\right)$, as $0\leq s<1$, and $\left(1-\left|w\right|^{2}\right)$ as $\left|w\right|\neq1$. Therefore, this equation simplifies to 
\[
    \left(\overline{w}+s_0^{2}w\right)c^{2}+2\left(1+\left|w\right|^{2}\right)c+\left(w+s_0^{2}\overline{w}\right)=0.
\]

Recall that to find solutions of $(\Delta_w^2 B)(w) = 0$, we need this midpoint $c$ to agree with $w$. Thus setting $c=w$ above yields

\[
    \left(\overline{w}+s_0^{2}w\right)w^{2}+2\left(1+\left|w\right|^{2}\right)w+\left(w+s_0^{2}\overline{w}\right) =0
\]
which simplifies to 
\[    
    s_0^{2}w^{3}+3\overline{w}w^{2}+3w+s_0^{2}\overline{w}  =0.
\]

Setting $w=x+iy$ we expand and factor the polynomial to obtain

\[
    x\left(s_0^{2}x^{2}-3y^{2}s_0^{2}+3x^{2}+3y^{2}+3+s_0^{2}\right)+y\left(3x^{2}s_0^{2}-s_0^{2}y^{2}+3x^{2}+3y^{2}+3-s_0^{2}\right)i  = 0.
\]

Equating real and imaginary parts yields the system of equations:

\begin{align}
    x\left(s_0^{2}x^{2}-3y^{2}s_0^{2}+3x^{2}+3y^{2}+3+s_0^{2}\right) & =0 \label{x sols} \\ 
    y\left(3x^{2}s_0^{2}-s_0^{2}y^{2}+3x^{2}+3y^{2}+3-s_0^{2}\right) & =0. \label{y sols}
\end{align}

Note that $x=0$ and $y=0$ is a solution. Our task is to show that it is the only valid solution in $\D$. Now suppose that $x=0$ but $y\neq0$. Then \eqref{y sols} simplifies to 

\begin{align*}
    -s_0^{2}y^{2}+3y^{2}+3-s_0^{2} & =0\\
    y^{2}\left(3-s_0^{2}\right) & =-\left(3-s_0^{2}\right)\\
    y^{2} & =-1,
\end{align*}

as $3-s_0^2 \neq 0$. Thus there are no such solutions. Similarly, if $y=0$ but $x\neq0$, then \eqref{x sols} simplifies to 

\begin{align*}
    s_0^{2}x^{2}+3x^{2}+3+s_0^{2} & =0\\
    x^{2}\left(3+s_0^{2}\right) & =-\left(3+s_0^{2}\right)\\
    x^{2} & =-1,
\end{align*}
as $3+s_0^2 \neq 0$. Again, there are no such solutions. Now suppose $x\neq0$ and $y\neq0$. Then we are solving the system of equations 

\begin{align*}
    x^{2}\left(3+s_0^{2}\right)+3y^{2}\left(1-s_0^{2}\right)+\left(3+s_0^{2}\right) & =0\\
    3x^{2}\left(s_0^{2}+1\right)+y^{2}\left(3-s_0^{2}\right)+\left(3-s_0^{2}\right) & =0.
\end{align*}

Subtracting these two equations gives

\begin{align*}
    3x^{2}+x^{2}s_0^{2}-3x^{2}s_0^{2}-3x^{2}+3y^{2}-3y^{2}s_0^{2}-3y^{2}+y^{2}s_0^{2}+3+s_0^{2}-3+s_0^{2} & =0\\
    -2x^{2}s_0^{2}-2y^{2}s_0^{2}+2s_0^{2} & =0\\
    x^{2}+y^{2} & =1.
\end{align*}

Therefore, any other solutions to the original system of equations lies on the unit circle and is not valid. We conclude that the only hyperbolic inflection point of $B$ lies at the point $w=0$, which is clearly at the midpoint of the two critical points. This completes the proof.
\end{proof}

\section{Dynamics of Cubic Blaschke Products}
\label{s:dynamics}

Here, we outline the strategy we will use to classify the type of the Blaschke products under consideration. Suppose that $B$ is a hyperbolic Blaschke product. Then all fixed points of $B$ necessarily lie on $\partial \D$. Moreover, every solution to $B(z) -z = 0$ must be simple, as otherwise $B$ would be a parabolic Blaschke product.

If $B$ is a degree $d$ Blaschke product, then solving $B(z) - z = 0$ is equivalent to solving a self-inversive polynomial equation $p(z) = 0$ of degree $d+1$. Again if $B$ is hyperbolic, then all roots of $p$ are simple. By the Euclidean Gauss-Lucas Theorem, Theorem \ref{thm:gausslucas}~(a), all the critical points of $p$ lie inside the polygon formed by the roots of $p$ and none of them lie at a vertex. Thus all the critical points of $p$ are in $\D$. Then if we set $q(z) = (p')^*(z)$, recalling the notation for a reciprocal polynomial, all the roots of $q$ lie in $\D^*$ and we are in a situation that we can apply the Schur-Cohn Algorithm, Theorem \ref{thm:schurcohn}.

We will thus find a condition on the parameters of $B$ which yield the hyperbolic Blaschke products. This will necessarily be an open set in parameter space whose boundary gives the parabolic parameters and what remains will be the elliptic parameters.

\subsection{The degree two case}

To illustrate the method, we apply it to the degree two case and recover \cite[Theorem 3.4]{CFY17}.
Recalling Theorem \ref{thm:schurcohn}, we need to compute $\delta_1$ and $\delta_2$ and ascertain when they are both positive.

Given a degree two Blaschke product, we may conjugate it by an element of $\mathcal{A}(\D)$ to $B$ so that the unique critical point of $B$ lies at the origin and $B$ takes the form
\[
    B(z)=\frac{z^2-u}{1-\overline{u}z^2},
\]
for some $u\in \D$. We therefore identify the parameter space for degree two Blaschke products with $\D$.

To find fixed points, we solve $B(z)=z$ and then clear denominators to obtain the self-inversive equation
\[ p(z) := \overline{u}z^3+z^2-z-u = 0.\]

As discussed at the start of this section, if $B$ is hyperbolic then all the roots of $p'$ must lie in $\D$. We set $q$ to be the reciprocal polynomial of $p'$, that is, 
\begin{equation}  
\label{eq:q}
q(z)=(p')^*(z)=-z^2+2z+3u.
\end{equation}
Then all roots of $q$ must lie in $\D^*$. Next, we will apply the Schur-Cohn Algorithm. We observe here that the algorithm can fail if we apply it to a self-inversive polynomial as the Schur transforms degenerate. However, it is clear from \eqref{eq:q} that $q$ is not self-inversive.

Starting with the first Schur transform, we have
\begin{align*}
    Tq(z) &=\overline{q(0)}q(z)-\overline{q^*(0)}q^*(z)\\
    &= 3\overline{u}\left(-z^2+2z+3u\right) - (-1)\left(3\overline{u}z^2+2z-1\right)\\
    &= (2+6\overline{u})z+9|u|^2-1.
\end{align*}
Now for the second Schur transform, we have 
\begin{align*}
    T^2q(z) &= \overline{Tq(0)}Tq(z)-\overline{(Tq)^*(z)}(Tq)^*(z)\\
    &= \left(9|u|^2-1\right)\left((2+6\overline{u})z+9|u|^2-1\right) - \left((2+6\overline{u}\right)\left( (9|u|^2-1)z+2+6u\right) \\
    &= \left(9|u|^2-1\right)^2 - |2+6u|^2.
\end{align*}
From these Schur transforms, we can compute $\delta_1,\delta_2$ as follows. We have
\[
    \delta_1 =Tq(0)=9|u|^2-1
\]
and
\[
    \delta_2=T^2q(0)=\left(9|u|^2-1\right)^2 - |2+6u|^2.
\]

The condition that $\delta_1>0$ simplifies to $|u| > 1/3$ and so $u\in \D$ lies outside the circle centred at the origin of radius $1/3$. The condition that $\delta_2>0$ is somewhat more involved.
Setting $u=x+iy$ for some real numbers $x$ and $y$ means
\[  \left(9\left|u\right|^{2}-1\right)^{2}-\left|6\overline{u}+2\right|^{2}  >0 \]
becomes
\[ \left(9\left(x^{2}+y^{2}\right)-1\right)^{2}  > \left(6x+2\right)^{2}+\left(6y\right)^{2}. \]
By expanding out both sides and factoring in terms of $x+\tfrac13$ and $y$, we obtain
\[  \left(\left(x+\frac{1}{3}\right)^{2}+y^{2}-\frac{2}{3}\left(x+\frac{1}{3}\right)\right)^{2}  > \frac{4}{9}\left(\left(x+\frac{1}{3}\right)^{2}+y^{2}\right). \]

This inequality corresponds to the region outside the cardioid in $\D$ with cusp at $-\tfrac13$. It follows that these are the hyperbolic parameters, the parabolic parameters are those on the cardioid, and the elliptic parameters are those inside the cardioid. We therefore recover \cite[Theorem 3.4]{CFY17}.

We note that if $\delta_2>0$ then $\delta_1>0$, and the only solution of $\delta_1=\delta_2=0$ occurs at the cusp of the cardioid. This cusp is the only parabolic parameter for which $J(B)$ is all of $\partial \D$.

\subsection{The degree three case}

For the degree three case, we start by assuming that $B$ has already been conjugated into the form given by Proposition \ref{prop:normal form}~(b). Then we identify parameter space with $(r,s) \in \D \times \D$ and recall that $(r,s)$ is identified with $(-r,s)$.

We consider first two special cases. Suppose that $r=0$ so we consider the slice $\{0\} \times \D$ in parameter space. Then $B$ takes the form
\[
B\left(z\right)=\frac{z^{3}-\overline{s}z}{1-sz^{2}}.
\]
It is clear that $B(0) = 0$ and so $0$ is the Denjoy-Wolff point of $B$. Thus $B$ is always elliptic in this case.

Next suppose that $s=0$ so we consider the slice $\D \times \{ 0 \}$ in parameter space. Then $B$ takes the form
\[
B\left(z\right)=\frac{z^{3}+r}{\overline{r}z^{3}+1}.
\]
As we can write $B(z) = A(z^3)$ for $A\in \mathcal{A}(\D)$, it follows that the only critical point of the Blaschke product is when $z=0$, and therefore it is a unicritical Blaschke product. Thus we are back in the case considered in \cite{CFY17,Fle15} and the parameter space picture is as in Figure \ref{fig:nephroid}. 

Now we move to the general case.

\begin{proof}[Proof of Theorem \ref{thm:dynamics}]

Let $B$ be a hyperbolic degree three Blaschke product of the form \eqref{eq:normal form}.
To find the fixed points of $B$, we solve
\[ \frac{z^{3}-srz^{2}-\overline{s}z+r}{\overline{r}z^{3}-sz^{2}-\overline{sr}z+1}  =z, \]
which leads to the self-inversive polynomial equation
\[p\left(z\right):=\overline{r}z^{4}-\left(1+s\right)z^{3}+\left(sr-\overline{sr}\right)z^{2}+\overline{\left(1+s\right)}z-r=0. \]

As in the degree two case, all roots of $p$ are simple and contained in $\partial \D$ and by the Gauss-Lucas Theorem, all critical points of $p$ lie in $\D$. Thus if we set
\begin{align*}
q\left(z\right)  =& (p')^*\left(z\right)\\
=& (1+s)z^3+2\left( \overline{sr}-sr\right)z^2-3\overline{(1+s)}z+4r,
\end{align*}
then all roots of $q$ lie in $\D^*$. As $q$ is clearly not self-inversive, we may apply the Schur-Cohn Algorithm. The first Schur transform is 
\begin{align*}
    Tq\left(z\right)  &= \overline{q(0)}q(z)-\overline{q^*(0)}q^*(z)\\
    &= 4\overline{r}q\left(z\right)-\left(1+s\right)q^{*}\left(z\right)\\ 
    &= 4\overline{r}\left((1+s)z^3+2\left( \overline{sr}-sr\right)z^2-3\overline{(1+s)}z+4r\right) \\&\hskip0.5in - \left(1+s\right)\left(4\overline{r}z^3-3(1+s)z^2+2(sr-\overline{sr})z+\overline{(1+s)}\right)\\
    &= \left(8\overline{r}\left(\overline{sr}-sr\right)+3\left(1+s\right)^{2}\right)z^{2}-\left(12\overline{r}\overline{\left(1+s\right)}+2\left(1+s\right)\left(sr-\overline{sr}\right)\right)z\\&\hskip0.5in +16\left|r\right|^{2}-\left|1+s\right|^{2}.
\end{align*}
Now the reciprocal polynomial for $Tq$ is 
\begin{align*}
    \left(Tq\right)^{*}\left(z\right)&=\left(16\left|r\right|^{2}-\left|1+s\right|^{2}\right)z^{2}-\left(12r\left(1+s\right)+2\overline{\left(1+s\right)}\left(\overline{sr}-sr\right)\right)z\\&\hskip0.5in +8r\left(sr-\overline{sr}\right)+3\overline{\left(1+s\right)^{2}}.
\end{align*}
Using this, we can calculate the second Schur transform,
\begin{align*}
    T^{2}q\left(z\right)  &= \overline{Tq(0)}Tq(z)-\overline{(Tq)^*(z)}(Tq)^*(z)\\
    &= \left(16\left|r^{2}\right|-\left|1+s\right|^{2}\right)\left(Tq\left(z\right)\right)-\left(8\overline{r}\left(\overline{sr}-sr\right)+3\left(1+s\right)^{2}\right)\left(\left(Tq\right)^{*}\left(z\right)\right)\\
    &= \left(16\left|r^{2}\right|-\left|1+s\right|^{2}\right)\left[\left(8\overline{r}\left(\overline{sr}-sr\right)+3\left(1+s\right)^{2}\right)z^{2} \right . \\
    & \hskip0.5in \left . -\left(12\overline{r}\overline{\left(1+s\right)}+2\left(1+s\right)\left(sr-\overline{sr}\right)\right)z+16\left|r\right|^{2}-\left|1+s\right|^{2}\right]\\ &\hskip0.5in -\left(8\overline{r}\left(\overline{sr}-sr\right)+3\left(1+s\right)^{2}\right)\left[\left(16\left|r\right|^{2}-\left|1+s\right|^{2}\right)z^{2} \right . \\ &\hskip0.5in \left .-\left(12r\left(1+s\right)+2\overline{\left(1+s\right)}\left(\overline{sr}-sr\right)\right)z+8r\left(sr-\overline{sr}\right)+3\overline{\left(1+s\right)^{2}}\right]\\
     &=  \left[ \left(\left|1+s\right|^{2}-16\left|r\right|^{2}\right)\left(2\left(1+s\right)\left(sr-\overline{sr}\right)+12\overline{r}\overline{\left(1+s\right)}\right) \right. \\ & \hskip0.5in \left. +\left(8\overline{r}\left(\overline{sr}-sr\right)+3\left(1+s\right)^{2}\right)\left(12r\left(1+s\right)+2\overline{\left(1+s\right)}\left(\overline{sr}-sr\right)\right)\right]z \\ & \hskip0.5in +\left(16\left|r\right|^{2}-\left|1+s\right|^{2}\right)^{2}-\left|8\overline{r}\left(\overline{sr}-sr\right)+3\left(1+s\right)^{2}\right|^{2}.
\end{align*}

Now the reciprocal polynomial for $T^{2}q$ is
\begin{align*}
\left(T^{2}q\right)^{*}\left(z\right)&=\left(\left(16\left|r\right|^{2}-\left|1+s\right|^{2}\right)^{2}-\left|8\overline{r}\left(\overline{sr}-sr\right)+3\left(1+s\right)^{2}\right|^{2}\right)z\\ &+\overline{\left(\left|1+s\right|^{2}-16\left|r\right|^{2}\right)\left(2\left(1+s\right)\left(sr-\overline{sr}\right)+12\overline{r}\overline{\left(1+s\right)}\right)} \\ &+ \overline{\left(8\overline{r}\left(\overline{sr}-sr\right)+3\left(1+s\right)^{2}\right)\left(12r\left(1+s\right)+2\overline{\left(1+s\right)}\left(\overline{sr}-sr\right)\right)}.
\end{align*}

The last Schur transform we need to calculate is the third transform,
which is given by
\begin{align*}
T^{3}q\left(z\right) & = \overline{T^2q(0)}T^2q(z)-\overline{(T^2q)^*(z)}(T^2q)^*(z)\\
&= \left( \left(16\left|r\right|^{2}-\left|1+s\right|^{2}\right)^{2}-\left|8\overline{r}\left(\overline{sr}-sr\right)+3\left(1+s\right)^{2}\right|^{2}\right)\left(T^2q\right)(z) \\ &\quad -\left[\left(\left|1+s\right|^{2}-16\left|r\right|^{2}\right)\left(2\left(1+s\right)\left(sr-\overline{sr}\right)+12\overline{r}\overline{\left(1+s\right)}\right) \right . \\ & \left . \quad +\left(8\overline{r}\left(\overline{sr}-sr\right)+3\left(1+s\right)^{2}\right)  \right . \\ & \quad \left . \cdot\left(12r\left(1+s\right)+2\overline{\left(1+s\right)}\left(\overline{sr}-sr\right)\right)\right]\left(T^{2}q\right)^{*}\left(z\right)\\
 &= \left(\left( 16|r|^2-\left|1+s\right|^2\right)^2-\left|8\overline{r}\left(\overline{sr}-sr\right)+3\left(1+s\right)^2\right|^2\right)^2 \\ &\quad -\left|\left(\left|1+s\right|^{2}-16\left|r\right|^{2}\right)\left(2\left(1+s\right)\left(sr-\overline{sr}\right)+12\overline{r}\overline{\left(1+s\right)}\right) \right . \\ & \quad \left . +\left(3\left(1+s\right)^{2}+8\overline{r}\left(\overline{sr}-sr\right)\right)\left(2\overline{\left(1+s\right)}\left(\overline{sr}-sr\right)+12r\left(1+s\right)\right)\right|^{2}.
\end{align*}

Now that we have all the Schur transforms, we can find $\delta_{1}, \delta_2$, and $\delta_3$:
\begin{align*}
    \delta_{1} =& Tq\left(0\right)=16\left|r\right|^{2}-\left|1+s\right|^{2},\\
    \delta_{2} =& T^{2}q\left(0\right)=\left(16\left|r\right|^{2}-\left|1+s\right|^{2}\right)^{2}-\left|3\left(1+s\right)^{2}+8\overline{r}\left(\overline{sr}-sr\right)\right|^{2},\\
    \delta_{3} =& T^{3}q\left(0\right)=\left(\left( 16|r|^2-\left|1+s\right|^2\right)^2-\left|8\overline{r}\left(\overline{sr}-sr\right)+3\left(1+s\right)^2\right|^2\right)^2 \\ 
    &\hskip0.6in -\left|\left(\left|1+s\right|^{2}-16\left|r\right|^{2}\right)\left(2\left(1+s\right)\left(sr-\overline{sr}\right)+12\overline{r}\overline{\left(1+s\right)}\right) \right . \\ 
    & \hskip0.6in \left . +\left(3\left(1+s\right)^{2}+8\overline{r}\left(\overline{sr}-sr\right)\right)\left(2\overline{\left(1+s\right)}\left(\overline{sr}-sr\right)+12r\left(1+s\right)\right)\right|^{2}.
\end{align*}

We see from these equations that $\delta_1$ is the first squared term in $\delta_2$, and $\delta_2$ is the first squared term in $\delta_3$. Therefore if we require $\delta_1,\delta_2,\delta_3$ to all be strictly positive, it's enough to check that $\delta_3>0$.

Therefore, we obtain the hyperbolic parameters $(r,s)$ by applying Theorem \ref{thm:schurcohn} and noting that $P(r,s)$ is precisely the formula given by $\delta_3$ above. The boundary of this region in $\D^2$ will be the parabolic parameters and thus corresponds to the equation $P(r,s) = 0$. Finally, the only case that remains is that of the elliptic parameters and so that corresponds to $P(r,s) <0$. This completes the proof.
\end{proof}

\subsection{Discussion and Images}

We include several images of slices in parameter space for fixed values of $s$. Recall that $s=0$ corresponds to the nephroid in Figure \ref{fig:nephroid}. As we vary the $s$-parameter along the real axis, the nephroid shape persists. As $s$ increases along the real axis, the shape gets wider and $s$ decreases, the shape gets narrower, as illustrated by Figures \ref{fig:cubic1} and \ref{fig:cubic2}.

\begin{figure}
    \centering
    \includegraphics[width=3in]{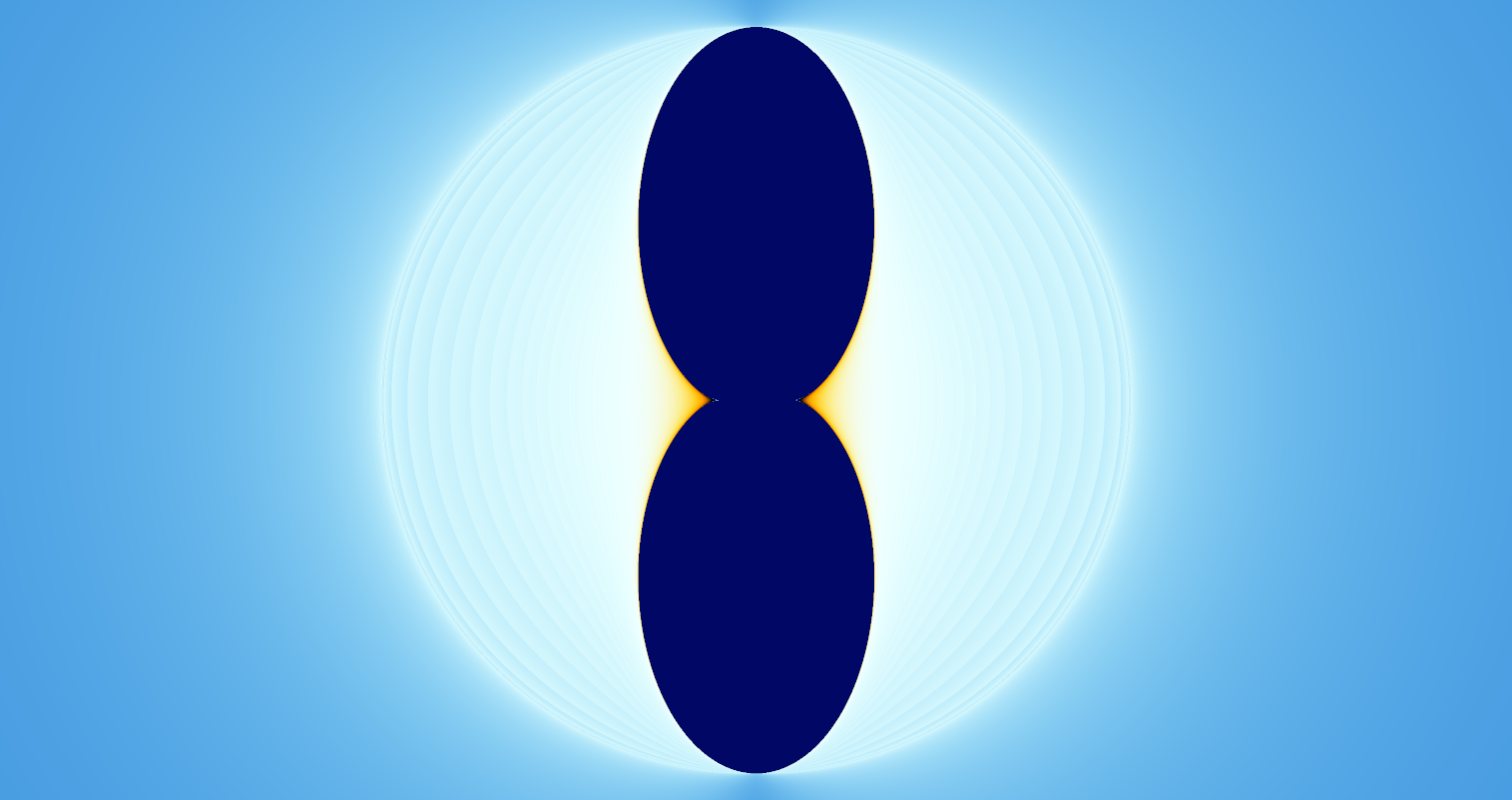}
    \caption{The slice in parameter space given by $s=-0.8$.}
    \label{fig:cubic1}
\end{figure}

As $s$ varies along the imaginary axis, the nephroid shape remains, but it distorts as the cusps move in different directions, see Figure \ref{fig:cubic3}. As $s$ heads towards $\partial \D$, the shape distorts towards a circle with two arms. Note that the condition that $\delta_1>0$ corresponds to
\[ |r| > \frac{|1+s|}{4}\]
and so the set of elliptic parameters in a given $s$-slice must always contain a disk centred at the origin of radius at least $\tfrac14$.

\begin{figure}
    \centering
    \includegraphics[width=3in]{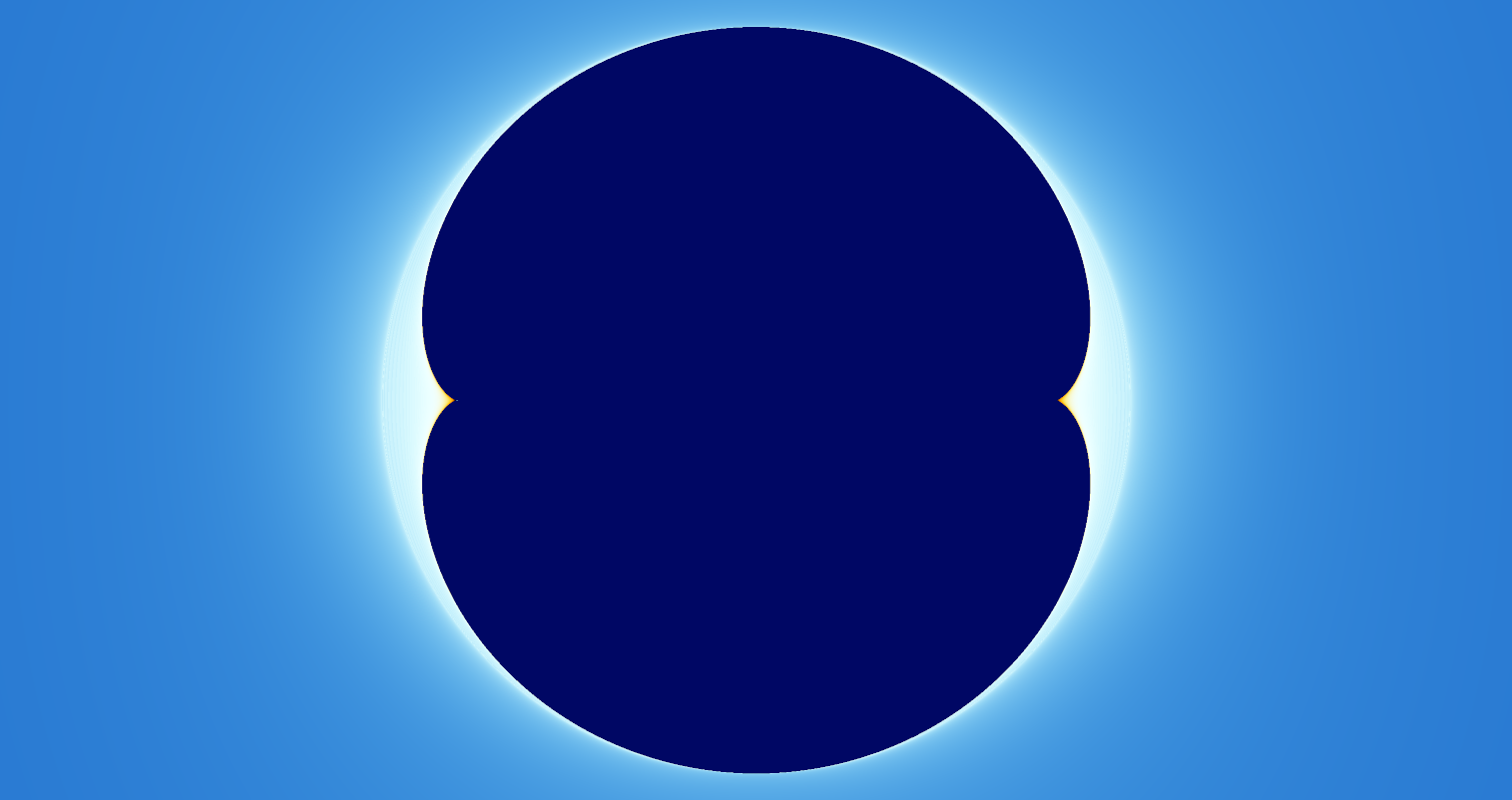}
    \caption{The slice in parameter space given by $s=0.6$.}
    \label{fig:cubic2}
\end{figure}

We also observe that all of these figures exhibit the rotational invariance that arises from the fact that $r$ and $-r$ are identified in parameter space through conjugating the Blaschke product by $-z$.

\begin{figure}
    \centering
    \includegraphics[width=3in]{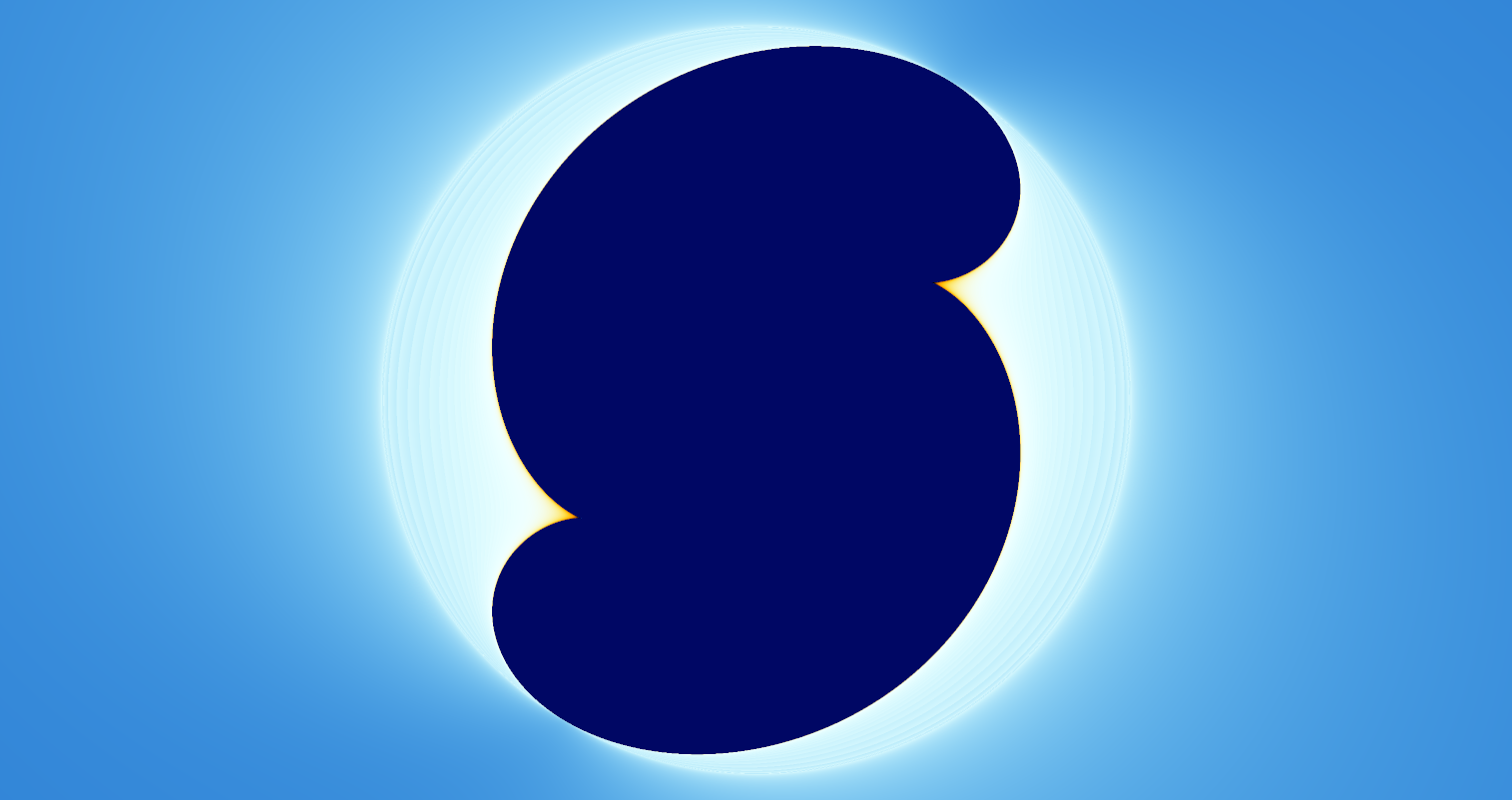}
    \caption{The slice in parameter space given by $s=-0.6i$.}
    \label{fig:cubic3}
\end{figure}

\begin{figure}
    \centering
    \includegraphics[width=3in]{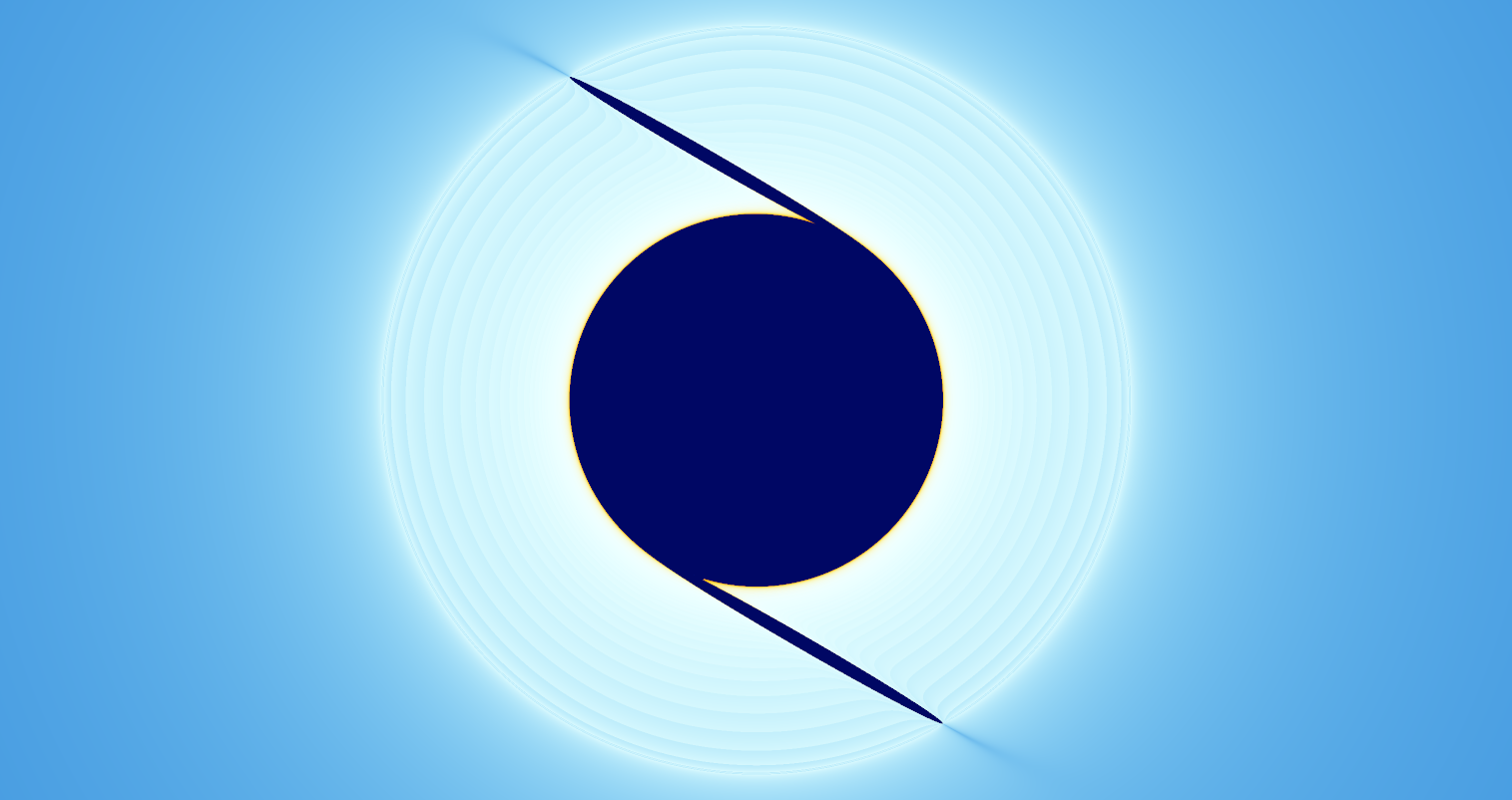}
    \caption{The slice in parameter space given by $s=-0.5+0.865i$.}
    \label{fig:cubic4}
\end{figure}

\section{Concluding Remarks}

There are several natural questions that are left open as a result of this work. Theorem \ref{thm:structure} shows that a cubic Blaschke product has three zeros, two critical points, and one hyperbolic inflection point. It seems reasonable to conjecture that if $B$ is a degree $d$ Blaschke product, then $B$ has $d-k$ solutions to $(\Delta_z^k B)(z)=0$ in $\D$. If this is true, then one could ask whether the unique solution to $(\Delta_z^{n-1} B)(z)= 0$ lies at the hyperbolic midpoint of the two solutions of $(\Delta_z^{n-2} B)(z) = 0$.

Due to the complicated nature of the expression $P(r,s)$ in Theorem \ref{thm:dynamics}, we were not able to characterize which parabolic parameters yield $J(B) = \partial \D$. We do know that in the $s=0$ slice, these parameters are given by the cusps of the nephroid \cite{Fle15}, and thus it seems plausible to conjecture that the cusps in other $s$-slices yield these parameters.

\end{document}